\newtheorem{theorem}{Theorem}[section]
\newtheorem*{theorem*}{Theorem}
\newtheorem{lemma}[theorem]{Lemma}
\newtheorem{prop}[theorem]{Proposition}
\newtheorem*{conj*}{Conjecture}
\theoremstyle{remark} 
\newtheorem*{question*}{Question}
\newtheorem{remark}[theorem]{Remark}
\newtheorem{ex}[theorem]{Example}
\theoremstyle{definition} 
\newtheorem{defn}[theorem]{Definition}
\numberwithin{equation}{section}  
\newcommand{\OO}{\mathcal{O}}    
\newcommand{\ZZ}{\mathbb{Z}}     
\newcommand{\NN}{\mathbb{N}}     
\newcommand{\QQ}{\mathbb{Q}}      
\newcommand{\CC}{\mathbb{C}}      
\newcommand{\be}{\begin{equation}}
\newcommand{\ee}{\end{equation}}
\newcommand{\benn}{\begin{equation*}}
\newcommand{\eenn}{\end{equation*}}
\newcommand{\ba}{\begin{aligned}}
\newcommand{\ea}{\end{aligned}}
\newcommand{\bbm}{\begin{bmatrix}}
\newcommand{\ebm}{\end{bmatrix}}
\newcommand{\bpm}{\begin{pmatrix}}
\newcommand{\epm}{\end{pmatrix}}
\newcommand{\bi}{\begin{itemize}}
\newcommand{\ei}{\end{itemize}}
\newcommand{\an}[1]{\operatorname{an}}  
\newcommand{\Crit}{\mathrm{Crit}}
 \newcommand{\PGL}{\mathrm{PGL}}    
\newcolumntype{M}[1]{>{\centering\arraybackslash}m{#1}}
\newcommand{\Manoa}{M\=anoa}
\newcommand{\Hawaii}{Hawai\kern.05em`\kern.05em\relax i}
\title{Cubic postcritically finite polynomials defined over $\QQ$}
\author{Jacqueline Anderson}
\author{Michelle Manes}
\thanks{The second author's work partially supported by Simons collaboration grant \#359721.}
\author{Bella Tobin}
\address{Department of Mathematics, Bridgewater State University, Bridgewater, MA 02325}
\address{Department of Mathematics,
University of \Hawaii\ at \Manoa,
Honolulu, HI}
\address{Department of Mathematics, Oklahoma State University, Stillwater, OK 74078}
\email{jacqueline.anderson@bridgew.edu}
\email{mmanes@math.hawaii.edu}
\email{bella.tobin@okstate.edu}
\begin{document}

\maketitle

 \begin{abstract}
We  find all  post-critically finite (PCF) cubic polynomials defined over $\QQ$, up to conjugacy over $\PGL_2(\bar \QQ)$. We describe  normal forms that classify equivalence classes of cubic polynomials while respecting the field of definition. Applying known bounds on the coefficients of post-critically bounded polynomials to these normal forms simultaneously at all places of $\QQ$, we create a finite search space  of cubic polynomials over $\QQ$ that may be PCF. 
Using a computer search of these possibly PCF cubic polynomials, we find fifteen which are in fact PCF.
\end{abstract}

\section{Introduction}

Let $K$ be a number field, and let $f(z) \in K[z]$ have degree $d\geq 2$.   Consider iterates of $f$:
\[
f^n(z) := \underbrace{f\circ f\circ\cdots\circ f}_{n\text{ times}} (z),
 \quad \text{ and } \quad f^0(z) := z.
\]
The orbit of a point $\alpha \in \bar K$ is the set
$\OO_f(\alpha) = \left\{ f^n(\alpha) \mid n \geq 0 \right\}$.

Rather than studying individual polynomials, we consider equivalence classes of  polynomials under conjugation by affine elements $\phi \in\PGL_2(\bar K)$. For $\phi(z) = az + b \in \bar K[z]$, we define
\[
f^\phi = \phi \circ f \circ \phi^{-1}.
\] 
Note that $f$ and $ f^\phi$ have the same 
dynamical behavior over $\bar{K}$ in the sense that  $\phi$ maps the orbit $\OO_f(\alpha)$ to $\OO_{f^\phi}\left(\phi(\alpha)\right)$.  

Critical points of $f$ are the points $\alpha \in \bar K$ such that $f'(\alpha) = 0$. 
Branner and Hubbard write in \cite{MR945011} ``the main question to ask about a rational map is: \emph{what are the orbits under iteration of the critical points?}'' Of particular interest are functions for which all critical points have either a bounded or finite orbit.

\begin{defn}
A polynomial $f$ is \emph{postcritically finite}  (PCF) if the orbit of each critical point is finite.  A polynomial is \emph{postcritically bounded} with respect to a given absolute value if the orbit of each critical point is bounded with respect to that absolute value. 
\end{defn}

The study of PCF maps has a long history in complex dynamics, from Thurston's work in the early 1980s and continuing to the present day, for example~\cite{BK, DMEC, FG1, FPP, GV2, GV3}.  In~\cite{MdADS}, Silverman describes PCF maps as an analog of abelian varieties with complex multiplication, so these maps are  of particular interest in arithmetic dynamics as well.   For example, all quadratic post-critically finite rational maps over $\QQ$ have been found in~\cite{lukasmanesyap}, and many cubic post-critically finite polynomials over $\QQ$ have been found in~\cite{ingram2010}. 
 
\begin{theorem*}\label{all pcf}
There are exactly fifteen $\bar \QQ$ conjugacy classes of cubic PCF polynomials defined over~$\QQ$\textup{:}
\begin{align*}
(1) &\quad z^3   & (2) &\quad -z^3+1     &(3) &\quad -2z^3+3z^2+\frac{1}{2}    \\
(4) &\quad  -2z^3+3z^2  & (5) &\quad -z^3+\frac 32 z^2 -1 & (6) & \quad  2z^3-3z^2+1   \\
(7) &\quad 2z^3-3z^2+\frac{1}{2} & (8) & \quad  z^3-\frac{3}{2}z^2  &(9) &\quad   -3z^3+\frac{9}{2}z^2 \\
(10) &  \quad    -4z^3+6z^2-\frac 12    &(11)&  \quad  4z^3-6z^2+\frac 32   &(12)&\quad  3z^3-\frac 92 z^2+1 \\
(13)&\quad  -z^3+\frac 32 z^2 -1  &(14)&\quad  -\frac14 z^3+\frac32 z + 2    & (15)& \quad  -\frac{1}{28} z^3-\frac34 z + \frac72 
\end{align*}
\end{theorem*}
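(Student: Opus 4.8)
The plan is to reduce the classification to a finite, explicit computation over $\QQ$ in three stages: first establish normal forms, then bound the search space at every place simultaneously, and finally check the finitely many remaining candidates by computer.

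\textbf{Stage 1: Normal forms respecting the field of definition.} First I would parametrize cubic polynomials up to conjugacy by $\phi(z)=az+b \in \PGL_2(\bar\QQ)$. A general cubic has four coefficients, and affine conjugacy gives a two-parameter group, so I expect a two-parameter family of normal forms. The crucial subtlety the abstract flags is that conjugacy is taken over $\bar\QQ$, yet we want to detect which classes contain a representative defined over $\QQ$; a naive normal form (e.g.\ depressed monic form) may require an irrational or non-rational conjugating $\phi$ and thus obscure the rationality question. So the goal of this stage is to produce normal forms parametrized by quantities that are themselves rational exactly when the conjugacy class is defined over $\QQ$. The natural coordinates are symmetric functions of the critical points together with the multiplier/leading data, since these are conjugacy invariants. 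I would fix the two critical points (or their symmetric functions) and the leading coefficient as parameters, handling separately the degenerate case of a single (double) critical point versus two distinct critical points, since the shape of the normal form differs.

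\textbf{Stage 2: Bounding coefficients at all places.} With the normal form in hand, the key input is the cited bounds on coefficients of postcritically bounded polynomials (the results behind the Branner--Hubbard/Ingram circle of ideas). If $f$ is PCF over $\QQ$, then at every place $v$ of $\QQ$ — the archimedean place and each $p$-adic place — the map is postcritically bounded with respect to $|\cdot|_v$, so its coefficients satisfy an explicit bound depending only on $v$ and the degree. Applying these bounds to the normal-form parameters simultaneously at all $v$ constrains each parameter to lie in a compact set archimedeanly and to have denominators supported on finitely many primes with bounded exponents $p$-adically. The product formula then forces the parameters into a finite set of rationals. This is the heart of the argument: translating ``PCF'' into a finite box by intersecting local constraints, exactly as the abstract describes.

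\textbf{Stage 3: Finite search and verification.} Enumerate the finitely many parameter values surviving Stage 2, writing down each candidate cubic. For each candidate, compute the forward orbit of both critical points and test whether both orbits are finite (i.e.\ eventually periodic, detectable by iterating until a repeat or until a value leaves a provably escaping region). Discard candidates that are not PCF, and then remove duplicates by checking $\bar\QQ$-conjugacy among the survivors — this deduplication step requires a conjugacy invariant (again symmetric functions of critical orbits, or the standard affine conjugacy invariants) to certify that no two of the fifteen listed maps are equivalent and that the list is complete.

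\textbf{Main obstacle.} The hardest part will be Stage 1--2 interface: choosing normal forms whose parameters simultaneously (a) detect the field of definition correctly and (b) are controlled by the known postcritically bounded coefficient bounds cleanly at \emph{every} place. A poor choice of coordinates could make the archimedean bound loose or the $p$-adic denominators hard to control, blowing up the search space beyond what is computationally feasible; conversely the bounds must be sharp enough that the surviving box is genuinely finite and small. I would expect the fractional leading coefficients appearing in examples such as $(14)$ and $(15)$ to be exactly the delicate cases, where the $p$-adic analysis at the primes dividing the denominator ($2$ and $7$, respectively) is what pins the parameter down, so verifying that the bounds at those primes are applied correctly — rather than the routine orbit computations — is where I would concentrate the care.
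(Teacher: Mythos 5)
Your three-stage plan coincides in outline with the paper's own strategy (normal forms respecting the field of definition, simultaneous bounds at all places, finite computer search), and your Stage 3 is sound. But Stage 1 is missing the case division that actually drives the paper: within the bicritical case one must distinguish cubics whose two critical points are rational from those whose critical points generate a quadratic extension $\QQ(\sqrt D)$, because no element of $\PGL_2(\QQ)$ can move irrational critical points to $\{0,1\}$. The paper therefore needs \emph{two} normal forms --- $a(-2z^3+3z^2)+c$ for rational critical points and $a(z^3/3 - Dz)+c$ with $D$ a squarefree integer for irrational ones (the two families overlap, which is harmless for exhaustiveness). The list entries (14) and (15) are exactly the classes admitting no rational-critical-point representative over $\QQ$; your remark about ``symmetric functions of the critical points'' points in the right direction, but you never carry it through to an explicit second family carrying the extra parameter $D$, and you split only unicritical versus bicritical.

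This omission makes your Stage 2 argument fail in the irrational case, and this is a genuine gap rather than a detail. It is simply not true there that the local bounds confine each parameter to a compact archimedean set with denominators supported on finitely many primes: only the product $aD$ lands in a finite explicit set (the paper proves this by conjugating over $\QQ(\sqrt D)$ back into the rational-critical-point form), while $D$ is archimedeanly unbounded and the denominator of $a$ may contain an arbitrarily large prime $P$ dividing $D$. So no product-formula argument applies directly, and at this point the candidate set is still infinite. The paper closes it with a disentangling step absent from your proposal: a Newton-polygon estimate gives $\abs{c\sqrt{a}}_p \leq 1$ for $p \geq 5$ (with analogous bounds at $2$ and $3$), which forces every large prime $P$ dividing $D$ to divide the numerator of $c$; writing $c = Pk/2^e$, the archimedean postcritical bound $\abs{c}^2 < 11\abs{D}$ then yields $Pk^2 < B$ for an explicit $B$, capping $P$ and leaving finitely many triples $(D,a,c)$ (5{,}957 in the paper). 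Without some such interplay tying the $p$-adic structure of $c$ to the prime factorization of $D$ and then invoking the archimedean bound, your search space never becomes finite in precisely the cases (14) and (15) that you yourself flagged as delicate --- your instinct located the right examples, but the mechanism is this $D$--$a$--$c$ entanglement, not merely careful $p$-adic bounds at the primes $2$ and $7$.
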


Of these, (1), (4), (6), (8), (10), and~(11) were found by Ingram in~\cite{ingram2010}. To complete the list, we  adapt Ingram's techniques as described below.

 Let $K$ be a number field, and let $f(z) \in K[z]$ be a cubic polynomial. Critical points of $f$ are roots of the quadratic polynomial $f'(z) \in K[z]$, so there are three possibilities:
\begin{enumerate}[(a)]
\item\label{case:birat} There are two distinct critical points: $\gamma_1\neq \gamma_2$, and they are both $K$-rational.
\item\label{case:biirat} There are two distinct critical points $\gamma_1 \neq \gamma_2 $ with $K(\gamma_1) = K(\gamma_2)$  a quadratic extension of $K$.
\item\label{case:uni} There is exactly one critical point, $\gamma \in K$.
\end{enumerate} 
 In the first two cases, we say that $f$ is \emph{bicritical}. In the third case, we say  $f$ is \emph{unicritical}.
In determining a complete list of cubic PCF polynomials defined over $\QQ[z]$, we treat each of these cases separately.

\begin{enumerate}
\item
For each of cases~\eqref{case:birat}--\eqref{case:uni} above, find a normal form for cubic polynomials such that every cubic polynomial over $\QQ[z]$ is conjugate to a map in one of these forms, and the conjugation respects the field of definition for the given case.
\item
For a map to be PCF, it must be post-critically bounded in each absolute value. 
Find archimedean and $p$-adic bounds on the coefficients for maps in the normal forms to be post-critically bounded. 
\item
Use the bounds in (2) to create a finite search space of possibly PCF maps.
\item
For each map in the finite search space, test if it is PCF or not.
\end{enumerate}

\subsection{Outline}
We begin in Section~\ref{sec:unicrit} by treating  the special case of a polynomial with a unique critical point. In Section~\ref{sec:normal}, we find the normal forms needed in Step (1) of the algorithm above. Section~\ref{sec:CoeffBounds} provides the  coefficient bounds described in Step (2). 
Finally, Section~\ref{cubicpcfsection} describes the algorithms and provides the complete list of PCF cubic polynomials defined over $\QQ$.

\subsection*{Acknowledgements}
Material from this article forms a part of the third author's Ph.D.~thesis. The authors thank the committee members for helpful comments: Rosie Alegado, Pavel Guerzhoy, Piper H, Ruth Haas, and Rob Harron. We thank Sarah Koch for helpful comments and conversation. 

The project was completed during a SQuaRE at the American Institute for Mathematics. The authors thank AIM for providing a supportive environment.

This material is based upon work supported by and while the second author served at the National Science Foundation. Any
opinion, findings, and conclusions or recommendations expressed in this material are those of the authors
and do not necessarily reflect the views of the National Science Foundation.

\section{Unicritical PCF polynomials}\label{sec:unicrit}

We begin by considering  unicritical PCF polynomials.
First, we will determine a normal form for  unicritical polynomials of arbitrary degree defined over a number field $K$. 
 In \cite{MR3890968}, Buff studied unicritical polynomials from a complex dynamics point of view, and he used that work to answer questions of Milnor and of Baker and DeMarco. Some of his preliminary work overlaps with the work here, specifically the normal form in Theorem~\ref{unithm} and the bound on $|a|$ in Proposition~\ref{archimedeanuni}. Because Buff was working over $\CC$, he did not consider questions about field of definition. Therefore, we provide full proofs of these results from a more arithmetic point of view.

\begin{theorem}
\label{unithm}
Let $f(z) \in K[z]$ be a degree $d$ unicritical polynomial. Then either $f(z)$ is $\bar{K}$-conjugate to $z^d$, or $f$ is conjugate to a unique polynomial of the form $$az^d+1 \in K[z].$$ 
\end{theorem}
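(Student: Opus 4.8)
The plan is to first pin down the exact shape of a unicritical polynomial, then reduce it to a normal form by two successive affine conjugations (a translation followed by a scaling), and finally verify uniqueness by analyzing which conjugations can preserve the normal form.

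First I would use the unicritical hypothesis to write $f$ explicitly. Since $f$ has degree $d$, its derivative $f'(z)\in K[z]$ has degree $d-1$; having a single critical point $\gamma$ forces $f'(z)=d\,a\,(z-\gamma)^{d-1}$, where $a\in K^\times$ is the leading coefficient of $f$. Comparing the coefficient of $z^{d-2}$ on both sides (which equals $-d(d-1)a\gamma$) shows $\gamma\in K$. Integrating then gives $f(z)=a(z-\gamma)^d+b$ with $b=f(\gamma)\in K$.

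Next I would conjugate by the $K$-rational translation $\phi(z)=z-\gamma$, which moves the critical point to $0$ and yields $f^\phi(z)=az^d+c$ with $c=b-\gamma\in K$. Conjugating $f^\phi$ further by a scaling $\psi(z)=\lambda z$ gives $(f^\phi)^\psi(z)=\tfrac{a}{\lambda^{d-1}}z^d+\lambda c$, and I would split into cases. If $c=0$, then $f$ is already of the form $az^d$, and choosing $\lambda$ with $\lambda^{d-1}=a$ — which in general requires a $(d-1)$-th root and hence lives only in $\bar{K}$ — normalizes the leading coefficient to $1$, giving the $\bar{K}$-conjugacy to $z^d$. If $c\neq 0$, I would instead take $\lambda=1/c\in K$, so that the constant term becomes $1$ while the leading coefficient becomes $ac^{d-1}\in K$; this exhibits a \emph{$K$-rational} conjugacy to a map of the form $az^d+1\in K[z]$.

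For uniqueness I would observe that every polynomial $az^d+1$ has $0$ as its unique critical point, and that affine conjugacy carries critical points to critical points; hence any affine map taking one normal form $az^d+1$ to another $a'z^d+1$ must fix $0$ and therefore be a scaling $\psi(z)=\lambda z$. Substituting into the scaling formula above forces $\lambda\cdot 1=1$, i.e. $\lambda=1$, whence $a'=a$, so the normal form is unique. The main obstacle — really the only delicate point — is the bookkeeping of the field of definition: confirming that the translation and (in the case $c\neq 0$) the scaling are genuinely $K$-rational, and isolating the extraction of a $(d-1)$-th root as the sole source of passage to $\bar{K}$, which is exactly what distinguishes the two alternatives in the statement.
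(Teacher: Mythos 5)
Your proposal is correct and follows essentially the same route as the paper's proof: translate the critical point to $0$, scale by $z \mapsto z/c$ (a $K$-rational map) when the constant term $c$ is nonzero, reserve the $\bar{K}$-only extraction of a $(d-1)$-th root for the $c = 0$ case, and derive uniqueness from the fact that any conjugacy between normal forms must fix the critical point $0$ and preserve the critical value $1$. Your only additions are welcome bits of rigor the paper leaves implicit, namely the coefficient comparison showing $\gamma \in K$ (which the paper absorbs into its case analysis of unicritical cubics) and the explicit observation that conjugation carries critical points to critical points in the uniqueness step.
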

\begin{proof}
Without loss of generality, we may replace $f$ by a conjugate map where the unique critical point $\gamma$ is moved to $0$. Since $\gamma \in K$, this does not change the field of definition. 
So we assume that $f(z) = bz^d+c \in K[z]$. 

If $c = 0$, then $f(z) = bz^d$ for $b \in K^\times$. Letting $\phi (z) = b^\frac{1}{d-1}z$, we have $f^\phi (z) = z^d$. 

 Now, assume $c \neq 0$. Conjugating by $\phi(z) = \frac{z}{c} $ gives 
$$f^\phi (z) = bc^{d-1}z^d + 1.$$ 
Since $b,c \in K^\times,$ then $bc^{d-1} \in K^\times$. Letting $a = bc^{d-1}$ gives the result.

Finally, $\phi$ is the only affine map in $\PGL_2(\bar K)$ fixing $0$  and satisfying $f^\phi(0) = 1$. Therefore, $f(z)$ is $\bar{K}$-conjugate to $az^d+1 \in K[z]$ for a unique $a \in K^\times$. 
\end{proof}

Theorem~\ref{unithm} implies that up to conjugacy every  unicritical polynomial $f\in K[z]$ is a power map or of the form $az^d+1$. In both cases $\Crit (f) = \lbrace 0 \rbrace$. If $f$ is a power map then $f(0) = 0$, hence $f$ is PCF. Therefore, in order to completely describe all other PCF  unicritical polynomials in $\QQ[z]$ (of any degree), we need only consider those of the form $f(z) = az^d+1$ for $ a \in \QQ^\times$. 

\begin{prop}[\cite{MR3890968}, Corollary 8]
\label{archimedeanuni}
If $f(z) = az^d+1 \in K[z]$ is post-critically finite, then $\abs{a}\leq 2$. 
\end{prop}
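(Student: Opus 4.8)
The plan is to exploit the fact, noted just before Step (2) in the introduction, that a PCF polynomial is post-critically bounded at \emph{every} absolute value; in particular the critical orbit is bounded with respect to the archimedean absolute value $\abs{\cdot}$. The goal is then to show that $\abs{a} > 2$ forces this orbit to escape to infinity, giving a contradiction. Since the unique critical point of $f(z) = az^d + 1$ is $0$, I would set $c_n = f^n(0)$, so that $c_1 = f(0) = 1$ and $c_{n+1} = a c_n^d + 1$ for all $n \geq 1$, and reduce everything to controlling the real sequence $\abs{c_n}$.

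First I would record the one-step escape estimate furnished by the (archimedean) reverse triangle inequality,
\[
\abs{c_{n+1}} = \abs{a c_n^d + 1} \geq \abs{a}\,\abs{c_n}^d - 1.
\]
The key observation is that once $\abs{c_n} \geq 1$, and using $d \geq 2$, we have $\abs{c_n}^d \geq \abs{c_n} \geq 1$, so that
\[
\abs{c_{n+1}} \geq \abs{a}\,\abs{c_n}^d - 1 \geq \left(\abs{a} - 1\right)\abs{c_n}^d \geq \left(\abs{a}-1\right)\abs{c_n}.
\]
When $\abs{a} > 2$ the growth factor $\abs{a} - 1$ strictly exceeds $1$.

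Next I would run the induction. The base case is $\abs{c_1} = 1 \geq 1$. Assuming $\abs{c_n} \geq 1$, the displayed estimate gives $\abs{c_{n+1}} \geq \left(\abs{a}-1\right)\abs{c_n} > \abs{c_n} \geq 1$, so the hypothesis $\abs{c_n} \geq 1$ is preserved, and iterating yields $\abs{c_{n+k}} \geq \left(\abs{a}-1\right)^k \abs{c_1} = \left(\abs{a}-1\right)^k \to \infty$. An unbounded critical orbit cannot be finite, contradicting the PCF hypothesis; therefore $\abs{a} \leq 2$.

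I expect the only genuine subtlety to be isolating the correct threshold. The estimate produces true multiplicative escape only once the growth factor $\abs{a} - 1$ is strictly larger than $1$, which is exactly what pins the bound at $2$ rather than some larger constant, and this is also why the induction must be anchored at $\abs{c_1} = 1$ (the critical value) rather than at the critical point $0$ itself. Everything else is a routine induction built on the reverse triangle inequality.
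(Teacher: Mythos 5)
Your proposal is correct and takes essentially the same approach as the paper: assume $\abs{a} > 2$, apply the reverse triangle inequality to show that any point of absolute value at least $1$ strictly escapes under iteration, and anchor the induction at the critical value $f(0) = 1$ to conclude the critical orbit is infinite. Your version merely makes the paper's one-line estimate $\abs{f(\alpha)} > \abs{\alpha}$ quantitative via the geometric growth factor $\abs{a}-1$, which is a cosmetic strengthening rather than a different argument.
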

\begin{proof}
Suppose $\abs{a}> 2$ and $\abs{\alpha} \geq 1$. Then 
$$\abs{f(\alpha)} = \abs{a\alpha^d + 1} > |\alpha|.$$
Inductively,  $\alpha$ must be a wandering point. Since $\Crit (f) = \lbrace 0\rbrace$ and $f(0) = 1$, we see that  $f$ is not PCF. Therefore, if $f \in K[z]$ is PCF it must be that $\abs{a}\leq 2$. 
\end{proof}

\begin{theorem}
\label{uniprop}
Let $f(z) = az^d+1 \in \QQ[z]$ and $d\geq 2$. For $d$ even,  $f$ is PCF if and only if $a \in \lbrace -2, -1\rbrace$. For $d$ odd, $f$ is PCF if and only if $a = -1$.  
\end{theorem}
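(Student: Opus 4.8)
The plan is to reduce the PCF condition to a single forward orbit, pin down $a$ to a short list of integers using absolute values at \emph{all} places of $\QQ$ simultaneously, and then check the few survivors by hand. Since $\Crit(f) = \lbrace 0\rbrace$ and $f(0) = 1$, the polynomial $f$ is PCF exactly when the forward orbit $\OO_f(1) = \lbrace 1, a+1, \dots\rbrace$ is finite, so the whole problem is to decide for which $a$ this orbit is finite.

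To bound $a$, Proposition~\ref{archimedeanuni} already supplies $\abs{a} \leq 2$ at the archimedean place. Next I would prove the non-archimedean analogue controlling the denominator of $a$: suppose $\ell$ is a prime with $\abs{a}_\ell > 1$. Then for any $\alpha$ with $\abs{\alpha}_\ell \geq 1$, we have $\abs{a\alpha^d}_\ell = \abs{a}_\ell \abs{\alpha}_\ell^d \geq \abs{a}_\ell > 1 = \abs{1}_\ell$, so the ultrametric inequality forces $\abs{f(\alpha)}_\ell = \abs{a}_\ell\abs{\alpha}_\ell^{d} > \abs{\alpha}_\ell \geq 1$. Starting from $\abs{1}_\ell = 1$, the orbit of $1$ thus has strictly increasing $\ell$-adic absolute value and is unbounded, so $f$ is not post-critically bounded at $\ell$ and hence not PCF. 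Therefore a PCF map must satisfy $\abs{a}_\ell \leq 1$ at every finite place, i.e.\ $a \in \ZZ$. Combined with $\abs{a}\leq 2$ and $a \neq 0$, this leaves only the four candidates $a \in \lbrace -2, -1, 1, 2\rbrace$.

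Finally I would test these four values by computing $\OO_f(1)$ directly, letting the parity of $d$ enter only where it must. For $a = 1$ and $a = 2$ one checks $f(x) > x$ for all real $x \geq 1$, so the orbit of $1$ is strictly increasing and escapes; these are never PCF. For $a = -1$ we get $f(1) = 0$, producing the $2$-cycle $0 \leftrightarrow 1$, so $f$ is PCF for every $d$. For $a = -2$ the parity is decisive: $f(1) = -1$ always, and when $d$ is even $f(-1) = -2(-1)^d + 1 = -1$ is fixed, so $f$ is PCF; when $d$ is odd $f(-1) = 3$ and the orbit then escapes, so $f$ is not PCF. Assembling these cases gives exactly the stated dichotomy. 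I expect the main obstacle to be the borderline values $a = \pm 2$: here $\abs{a} = 2$ sits precisely at the boundary of Proposition~\ref{archimedeanuni}, so its strict inequality does not apply and the escape (or non-escape) of the orbit must be settled by the explicit computation above, which is exactly where the parity of $d$ becomes relevant.
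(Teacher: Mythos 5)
Your proposal is correct and follows essentially the same route as the paper's proof: the $p$-adic growth argument forcing $a \in \ZZ$, the citation of Proposition~\ref{archimedeanuni} for $\abs{a} \leq 2$, and a direct orbit check of the four survivors $a \in \{\pm 1, \pm 2\}$, with the parity of $d$ deciding $a = -2$. The one loose end is the subcase $a = -2$, $d$ odd: after computing $f(-1) = 3$ you assert ``the orbit then escapes,'' but your only escape criterion, $f(x) > x$ for real $x \geq 1$, was established for $a = 1, 2$ and does not apply to $f(z) = -2z^d + 1$, which sends large positive reals to large negative ones, so the orbit of $3$ alternates in sign and its infinitude is not covered by monotonicity. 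The paper closes exactly this with a uniform absolute-value estimate: for any of the candidate values ($\abs{a} \geq 1$), if $\abs{\alpha} > 2$ then $\abs{f(\alpha)} \geq \abs{\alpha}^d - 1 > 2^{d-1}\abs{\alpha} - 1 > \abs{\alpha}$, so any orbit point exceeding $2$ in absolute value wanders; since $3 > 2$, the case is settled. Adding this one-line bound (or any equivalent estimate on $\abs{f(\alpha)}$ for $\abs{\alpha} \geq 2$) makes your argument complete.
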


\begin{proof}
Suppose $|a|_p>1$ for some prime $p$. If $|z|_p \geq 1$, then $|f(z)|_p = |az^d+1|_p  = |az^d|_p> |z|_p$, so $\alpha$ is a wandering point if there exists $n \geq 0$ such that $|f^n(\alpha)|_p \geq 1$. In particular, $f(0)=1$, so the critical point $0$ is a wandering point and $f$ is not PCF. We conclude that for all primes $p$,  $\abs{a}_p \leq 1$; hence $a \in \ZZ$. 
By Proposition \ref{archimedeanuni}, $\abs{a}\leq 2$, so $a\in \lbrace \pm 1, \pm 2\rbrace$. 

Suppose that $\abs{\alpha}> 2$. Then 
\[
    \abs{f(\alpha)} = \abs{a\alpha^d +1} > 2^{d-1}\abs{\alpha} - 1 > |\alpha|.
    \]
Inductively, $\alpha$ must be a wandering point for $f$.

If $a = 1$, then
$f^{3} (0) = 2^d+1$, so $0$ must be a wandering point. 
 If $a = 2$, then $f^{2}(0) =3$, so $0$ must be a wandering point. 
 If $a = -1$, then $f^2(0) = 0$, so $f$ is PCF. 

Finally, consider the case $a = -2$. If $d$ is even then $f^{2} (0) = f^3(0) = -1$, so $f$ is PCF. If $d$ is odd, then $f^3(0) = 3$, so $0$ is a wandering point. 
 \end{proof}

\section{Normal forms for  bicritical polynomials}\label{sec:normal}
Cubic polynomials have been studied extensively in complex dynamics, e.g. \cite{MR2600536, MR2670510, MR945011, MR1194004, MR2508263}, and in arithmetic dynamics, e.g. \cite{ingram2010}. All of these use the Branner-Hubbard normal form, sometimes also called the monic centered form:
$$
F(z) = z^3+Az+B
\quad \text{with critical points } \pm \alpha \text{ where } \alpha = \sqrt{\frac{-A}{3}}.
$$
This form may be preferred in complex dynamics, but it is not ideal in arithmetic dynamics because it does not preserve the field of definition of the polynomial. 
For example, in~\cite{ingram2010}, Ingram shows that if $K$ is a number field and $F(z) \in K[z]$ is PCF, then the pairs $(A,B)$ are in a finite  computable set, and he finds the set in the case $F(z) \in \QQ(z)$. However,  our Table~\ref{CubicTable} shows that fewer than half of the PCF cubic polynomials defined over $\QQ$ are conjugate to some $F(z) \in \QQ[z]$ in the Branner-Hubbard form.

\begin{ex}
Consider the PCF polynomial $f \in \QQ [z]$ given by 
$f(z) = 3z^3-\frac 92 z^2+1$. Conjugating by 
\[
\phi (z) = \sqrt{3}z- \frac{\sqrt{3}}{2} \text{ gives }
f^\phi (z) =  z^3-\frac{9}{4}z-\frac{\sqrt{3}}{4} \not\in \QQ[z].
\]
\end{ex}

In this section, we describe  normal forms for cubic bicritical polynomials, one for the case of rational critical points and one for the case of irrational critical points.
These cases are not disjoint, but both are necessary  to exhaustively list all PCF cubic polynomials. It is a simple matter to check that our final list of cubic polynomials contains no conjugate maps, so this is of no concern.

\begin{ex}\label{ex:hasaut}
Let 
\[
f_1(z) = \frac{z^3}{4}-\frac{3 z}{2}, \text{ so } \Crit(f_1) = \left\{\pm \sqrt{2} \right\}.
\]
Moving the two critical points to 0 and 1 gives the polynomial $g_1(z) = 2 z^3 - 3 z^2 + 1$.
These conjugate polynomials --- one with rational critical points and one with irrational critical points --- are both defined over $\QQ$.
\end{ex}

If $f(z) \in K[z]$ has two rational critical points, we may conjugate to move them to 0 and 1 without changing the field of definition. From~\cite[Proposition~2.3]{manesbelyi}, we know that there is a \emph{unique} conjugacy class of bicritical polynomials of degree $d\geq 3$ with fixed critical points  $\gamma_1$ and $\gamma_2$, and with prescribed ramification at the two critical points.
Moreover, we have a formula for this polynomial when  $\{\gamma_1, \gamma_2\} = \{0,1\}$.
Call the polynomial $\mathcal{B}_{d,k}(z)$. Since the critical points are at $0$ and $1$ and the polynomial has degree $d$, we have
\[
\mathcal{B}'_{d,k}(z) = c z^{d-k-1}(z-1)^k 
\]
for some $1\leq k <d -1$ and some constant $c$. So $d-k$ is the ramification index of $\mathcal{B}_{d,k}(z)$ at the critical point $0$, and $k+1$ is the ramification index at the critical point $1$. Expanding with the binomial theorem, integrating term-by-term, and requiring that the two critical points are fixed gives:
\begin{equation}\label{eqn:BelyiFrom}
\mathcal{B}_{d,k}(z)= \left(\frac{1}{k!} \prod_{j=0}^k(d-j)\right)z^{d-k}\sum\limits_{i=0}^k \frac{(-1)^{i}}{(d-k+i)}{k \choose i} z^i. 
\end{equation}


Since we are concerned with the case $d=3$, necessarily $k=1$, giving the polynomial:
\begin{equation}\label{eqn:CubicBelyiFrom}
\mathcal{B}_{3,1}(z)= -2z^3 + 3z^2.
\end{equation}

\begin{prop}
\label{familyprop}
Let $g \in K[z]$ be a bicritical polynomial of degree $d\geq 3$ with $\Crit(g) = \{\gamma_1, \gamma_2\} \subseteq K$. There exists  an element $\phi \in \PGL_2 (K)$ such that $g^\phi = a\mathcal{B}_{d,k} +c$ for some $k\in \NN$ and some $a, c \in K$.
\end{prop}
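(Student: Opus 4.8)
The plan is to first use the $K$-rationality of the critical points to move them to $0$ and $1$ by a conjugation defined over $K$, and then to recover the normal form by matching derivatives and integrating. The whole point is that keeping everything over $K$ is forced by the hypothesis $\{\gamma_1,\gamma_2\}\subseteq K$, not by any clever choice.

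For the first step, since $\gamma_1 \neq \gamma_2$ and both lie in $K$, the affine map $\phi(z) = \frac{z - \gamma_1}{\gamma_2 - \gamma_1}$ belongs to $\PGL_2(K)$ and sends $\gamma_1 \mapsto 0$ and $\gamma_2 \mapsto 1$. Set $h := g^\phi \in K[z]$. Because conjugation by an affine map carries critical points to critical points and preserves ramification indices, $h$ is again a degree-$d$ bicritical polynomial with $\Crit(h) = \{0,1\}$, where the ramification index of $h$ at $0$ (resp.\ $1$) equals that of $g$ at $\gamma_1$ (resp.\ $\gamma_2$). This is the one place where the hypothesis $\{\gamma_1,\gamma_2\} \subseteq K$ is essential: it is exactly what guarantees $\phi \in \PGL_2(K)$, so that the field of definition is preserved.

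For the second step, I would pin down $h'$ from its roots. Writing $e_0$ and $e_1$ for the ramification indices of $h$ at $0$ and $1$, the point $0$ is a root of $h'$ of multiplicity $e_0 - 1$ and $1$ is a root of multiplicity $e_1 - 1$. Since $h$ is bicritical these are the only (affine) roots of $h'$, and $\deg h' = d-1$ forces $(e_0 - 1) + (e_1 - 1) = d - 1$. Setting $k := e_1 - 1$ (so that $1 \le k \le d-2$, as each critical point has ramification index at least $2$) gives $e_0 - 1 = d - k - 1$, and hence
$$h'(z) = \lambda\, z^{d-k-1}(z-1)^k$$
where $\lambda \in K^\times$ is the leading coefficient of $h'$, which lies in $K$ because $h \in K[z]$.

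Finally, from \eqref{eqn:BelyiFrom} one has $\mathcal{B}'_{d,k}(z) = c_0\, z^{d-k-1}(z-1)^k$ with $c_0 \in \QQ^\times \subseteq K^\times$. Taking $a := \lambda/c_0 \in K$, the polynomial $h - a\,\mathcal{B}_{d,k}$ has identically zero derivative, so it equals a constant $c \in K$; that is, $g^\phi = h = a\,\mathcal{B}_{d,k} + c$, as desired. I do not expect a genuine obstacle: the content is bookkeeping, and the only points requiring care are verifying that all of the constants ($\phi$, $\lambda$, $c_0$, hence $a$ and $c$) descend to $K$, and matching the labelling of the two critical points to the ramification convention built into $\mathcal{B}_{d,k}$ (interchanging $\gamma_1$ and $\gamma_2$ merely swaps $k$ with $d-1-k$, which is harmless since we only need existence).
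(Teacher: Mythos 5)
Your proposal is correct and follows essentially the same route as the paper: conjugate by $\phi(z) = \frac{z-\gamma_1}{\gamma_2-\gamma_1} \in \PGL_2(K)$, match the derivative $z^{d-k-1}(z-1)^k$ up to a scalar, and integrate. Your argument is in fact slightly more explicit than the paper's at the final step, since you justify $a, c \in K$ via the leading coefficient $\lambda \in K^\times$ of $h'$ rather than simply noting $g, \phi \in K[z]$.
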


\begin{proof}
Let $g \in K[z]$ with critical points $\gamma_1, \gamma_2\in K$. 
Choose $k \in \NN$  such that $d-k$ is the ramification index of $\gamma_1$ and $k+1$ is the ramification index of $\gamma_2$. Define $\phi (z) = \frac{z-\gamma_1}{\gamma_2-\gamma_1}\in \PGL_2(K)$, which  moves the critical points to $0$ and $1$, respectively.

If $f(z) = g^\phi (z)$, then $f$ has critical points at $0$ and $1$ and degree $d$, so
\[
f'(z) = \alpha z^{d-k-1}(z-1)^{k} = a \mathcal{B}'_{d,k}(z)
\]
for some $a \in \bar{K}^\times$.  

Then $f(z) = a \mathcal{B}_{d,k}(z) + c$, and since $f = g^\phi$ where both $g, \phi \in K[z]$, we have $a, c \in K$.
\end{proof}

We now consider a normal form for cubic polynomials $g \in K[z]$ with critical points in a quadratic extension of $K$.

Let $D \in \mathcal{O}_K^\times$ and let $d\geq 3$ be odd.  We define a polynomial $\mathcal{P}_{d,D}(z) \in K[z]$ by the following conditions:
\begin{itemize}
\item
$\mathcal{P}'_{d,D}(z) = (z^2 - D)^{(d-1)/2}$.
\item
$\mathcal{P}_{d,D}(0) = 0$.
\end{itemize}

Then $\mathcal{P}_{d,D}(z)$  is a bicritical polynomial  having critical points $\left\{\pm \sqrt D\right\}$ each with ramification index $ \frac{d+1}2$. Just as with the polynomials $\mathcal{B}_{d,k}(z)$, we expand the derivative $ \mathcal{P}'_{d,D}(z)$ using the binomial theorem, integrate term-by-term, and use the fact that~$0$ is fixed to find a formula for these polynomials:
\begin{equation}\label{eq:irratform}
\mathcal{P}_{d,D}(z) = \sum_{j=0}^{\frac{d-1}2} (-D)^{\frac{d-1}2-j}\binom{\frac{d-1}2}{j}\frac{z^{2j+1}}{2j+1}.
\end{equation}
Of particular interest in the sequel is the cubic case:
\begin{equation}\label{eq:deg3irrat}
\mathcal{P}_{3,D}(z) = \frac{z^3}{3} - Dz.
\end{equation}

\begin{prop}\label{prop:MoveCritPts}
Let $g(z) \in K[z]$ be a bicritical polynomial of degree $d\geq 3$. Suppose that $\Crit(g) = \{\gamma_1,\gamma_2\} \not\subset K$. Then $g$ is conjugate to a map of the form $a \mathcal{P}_{d,D}(z) + c$ for some $a, c \in K$ and some $D \in \mathcal{O}_K^\times / \mathcal{O}_K^2$.
\end{prop}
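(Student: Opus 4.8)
The plan is to mirror the proof of Proposition~\ref{familyprop}, but now the obstruction is that the two critical points are irrational, so no $\PGL_2(K)$ map can send them to $0$ and $1$ individually. Instead I would exploit that $\gamma_1,\gamma_2$ form a Galois-conjugate pair and move their \emph{midpoint} to the origin, since the midpoint and the squared half-difference are the $K$-rational symmetric functions of the pair.

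First I would pin down the shape of $g'$. Since $g\in K[z]$, the derivative $g'(z)\in K[z]$ has $\gamma_1,\gamma_2$ as its only roots, and by hypothesis $\gamma_1\notin K$ with $K(\gamma_1)=K(\gamma_2)$ quadratic over $K$. Hence $\gamma_2=\sigma(\gamma_1)$, where $\sigma$ generates $\Gal(K(\gamma_1)/K)$, and the irreducible quadratic $p(z)=(z-\gamma_1)(z-\gamma_2)\in K[z]$ divides $g'$. Writing $g'(z)=c\,(z-\gamma_1)^{m_1}(z-\gamma_2)^{m_2}$ and applying $\sigma$ (which fixes $g'$ coefficientwise) forces $m_1=m_2=:m$ by unique factorization in $\bar{K}[z]$. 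Thus $d-1=2m$, so $d$ is odd (if $d$ were even the case would be vacuous), and
\[
g'(z)=c\,\big((z-\gamma_1)(z-\gamma_2)\big)^{(d-1)/2},\qquad p(z)=(z-\gamma_1)(z-\gamma_2)\in K[z].
\]

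Next I would center the critical points. The midpoint $\mu=\tfrac{\gamma_1+\gamma_2}{2}$ is $\sigma$-fixed, hence lies in $K$, so $\phi(z)=z-\mu\in\PGL_2(K)$ and $h:=g^\phi\in K[z]$. The critical points of $h$ are $\pm\beta$ with $\beta=\tfrac{\gamma_1-\gamma_2}{2}$, and $D_0:=\beta^2=\tfrac{(\gamma_1+\gamma_2)^2-4\gamma_1\gamma_2}{4}$ is again $\sigma$-fixed, so $D_0\in K$. Therefore $h'(z)=c'(z^2-D_0)^{(d-1)/2}$ for some $c'\in\bar{K}^\times$; since both $h'(z)$ and $(z^2-D_0)^{(d-1)/2}$ lie in $K[z]$, in fact $c'\in K^\times$. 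Comparing with \eqref{eq:irratform}, the polynomial $a\,\mathcal{P}_{d,D_0}$ with $a=c'$ has the same derivative as $h$, so $h-a\,\mathcal{P}_{d,D_0}$ is constant; evaluating at $0$ and using $\mathcal{P}_{d,D_0}(0)=0$ gives $h=a\,\mathcal{P}_{d,D_0}+c$ with $c=h(0)\in K$ and $a\in K^\times$.

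Finally I would normalize $D_0$. Conjugating $h$ further by a scaling $\psi(z)=\lambda z$ with $\lambda\in K^\times$ leaves the form intact while replacing $(a,D_0)$ by $\big(\lambda^{-(d-1)}a,\ \lambda^2 D_0\big)$; in particular $D_0$ is only well defined modulo squares of $K^\times$, and I would choose $\lambda$ to send it to the designated representative of its square class, yielding $D\in\mathcal{O}_K^\times/\mathcal{O}_K^2$ with $a,c\in K$. The main obstacle is the first step: the argument that the two irrational critical points must occur with \emph{equal} multiplicity in $g'$, which is what simultaneously forces $d$ odd and produces the $K$-rational quadratic factor $p(z)$ needed to define $D_0$ over $K$. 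Everything after that is the same translate-and-rescale bookkeeping as in Proposition~\ref{familyprop}, with the one subtlety that the normalizing translation must be by the Galois-invariant midpoint rather than by a single critical point.
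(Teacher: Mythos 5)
Your proposal is correct and follows essentially the same route as the paper: the paper likewise observes that $g'$ must be a constant times a power of the $K$-rational irreducible quadratic $(z-\gamma_1)(z-\gamma_2)$ (forcing $d$ odd and equal ramification), and conjugates by the affine map $\phi(z)=\frac{z-m}{n}$ over $K$ sending the critical points to $\pm\sqrt{D}$ --- which is exactly your midpoint translation and scaling, performed in one step by writing $\gamma_{1,2}=m\pm n\sqrt{D}$ with $D$ already a square-class representative. Your Galois-conjugation argument for the equal multiplicities $m_1=m_2$ just makes explicit a step the paper asserts without proof.
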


\begin{proof}
By definition, $ \{\gamma_1,\gamma_2\} $ are roots of the  polynomial $g'(z) \in K[z]$. Since they are not in $K$, we must have  $g'(z) = \alpha (h(z))^\beta$ where $h \in K[z]$ is an irreducible quadratic polynomial. 
Note: In this case, $d$ is odd and the ramification index of each critical point is  $ \frac{d+1}2$.

Therefore there are $m, n \in K$ with $n\neq 0$ and   $D \in \mathcal{O}_K^\times / \mathcal{O}_K^2$ such that $\gamma_1 = m + n\sqrt{D}$ and  $\gamma_2 = m -n\sqrt{D}$.
Consider 
\[
\phi(z) = \frac{z-m}{n} \in K[z],
\quad \text{ which satisfies }
\phi(\gamma_1) = \sqrt{D} \text{ and } \phi(\gamma_2) = -\sqrt{D}.
\]
 Define $f(z) = g^\phi(z)$. Since $g, \phi \in K[z]$, we have $f(z) \in K[z]$. Hence $f'(z) \in K[z]$.
Furthermore, $\Crit(f) = \Crit(g^\phi) = \phi(\Crit(g)) = \{\pm \sqrt{D} \}$. Therefore, $f'(z) = a(z^2 - D)^{(d-1)/2}$ for some $a \in K$, which means that $f(z) =  a  \mathcal{P}_{d,D}(z) + c \in K[z]$.
\end{proof}

\section{Coefficient Bounds for PCF cubic polynomials over $\QQ$}\label{sec:CoeffBounds}
From Corollary 1.2 in \cite{ingram2010},  for any number field $K$ there are finitely many conjugacy classes of post-critically finite polynomial maps of degree $d$ in $K[z]$. We would like to use the  normal forms in Section~\ref{sec:normal} to determine a representative of each conjugacy class of PCF cubic polynomials over $\QQ$. Many of these results can be extended to  bicritical maps of arbitrary degree (see~\cite{tobin}).

Let $f(z) = a_dz^d+a_{d-1}z^{d-1}+\ldots +a_1z+a_0 \in K[z]$.
Following Ingram \cite{ingram2010}, we set the following notation:
\begin{align*}
(2d)_\nu &= \begin{cases} 1 & \nu \text{ is non-archimedean }\\
2d & \nu \text{ is archimedean}
\end{cases}\\
C_{f,\nu} &= (2d)_\nu \max\limits_{0\leq i<d} \left\lbrace 1, \abs{\frac{a_i}{a_d}}_\nu^{\frac{1}{d-i}}, \abs{a_d}_\nu^{-\frac{1}{d-1}}\right\rbrace.
\end{align*}



The following lemma show that $C_{f,\nu}$ gives an effective $\nu$-adic bound for preperiodic points (points with finite orbit) of a polynomial $f(z) \in \QQ[z]$. Applying this bound to the critical points will, in turn, give $\nu$-adic bounds on the coefficients for PCF polynomials. Ingram  uses $C_{f,\nu}$ in exaclty this way in~\cite{ingram2010} without stating and proving a lemma of this sort. We provide Lemma~\ref{cor1} and its proof for clarity and completeness.

\begin{lemma}
\label{cor1}
Let $f(z) \in \QQ[z]$ be a polynomial of degree $d\geq 2$. For $\alpha\in \QQ$, if there exists $\nu \in M_\QQ$ and $n\in\NN$ such that $$\abs{f^n(\alpha)}_\nu >C_{f,\nu},$$ then $\alpha$ must be a wandering point (have infinite orbit) for $f$. 
\end{lemma}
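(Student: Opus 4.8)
The plan is to show that once an iterate of $\alpha$ escapes the bound $C_{f,\nu}$ in some absolute value $\nu$, all subsequent iterates strictly increase in $\nu$-adic absolute value, forcing the orbit to be infinite. The key is that $C_{f,\nu}$ is engineered precisely so that outside the disk of radius $C_{f,\nu}$, the top-degree term of $f$ dominates, and the map expands. Concretely, I would first reduce to a clean estimate: for any $z$ with $\abs{z}_\nu > C_{f,\nu}$, I claim $\abs{f(z)}_\nu > \abs{z}_\nu$. Granting this claim, the lemma follows immediately by induction: if $\abs{f^n(\alpha)}_\nu > C_{f,\nu}$, then $\abs{f^{n+1}(\alpha)}_\nu > \abs{f^n(\alpha)}_\nu > C_{f,\nu}$, and so the sequence $\abs{f^m(\alpha)}_\nu$ is strictly increasing for $m \geq n$, hence the orbit contains infinitely many distinct points and $\alpha$ is a wandering point.

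The heart of the argument is the expansion claim, and this is where I would split into the archimedean and non-archimedean cases, since the factor $(2d)_\nu$ differs. Write $f(z) = a_d z^d + \sum_{i<d} a_i z^i$, so that $f(z) = a_d z^d\left(1 + \sum_{i<d} \frac{a_i}{a_d} z^{i-d}\right)$. The definition of $C_{f,\nu}$ guarantees two things simultaneously for $\abs{z}_\nu > C_{f,\nu}$: first, $\abs{z}_\nu > \abs{a_i/a_d}_\nu^{1/(d-i)}$ for each $i<d$, which means $\abs{a_i/a_d}_\nu \abs{z}_\nu^{i-d} < 1$, so each lower-order term is dominated by the leading term; and second, $\abs{z}_\nu > \abs{a_d}_\nu^{-1/(d-1)}$, which rearranges to $\abs{a_d}_\nu \abs{z}_\nu^{d-1} > 1$. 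In the non-archimedean case, where $(2d)_\nu = 1$, the ultrametric inequality gives $\abs{f(z)}_\nu = \abs{a_d z^d}_\nu = \abs{a_d}_\nu \abs{z}_\nu^d = \left(\abs{a_d}_\nu \abs{z}_\nu^{d-1}\right)\abs{z}_\nu > \abs{z}_\nu$, using that the leading term strictly dominates in size so the sum has the norm of its largest term.

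In the archimedean case the factor $(2d)_\nu = 2d$ compensates for the loss in the triangle inequality. Here I would bound $\abs{f(z)}_\nu \geq \abs{a_d}_\nu \abs{z}_\nu^d - \sum_{i<d}\abs{a_i}_\nu \abs{z}_\nu^i$, and then use $\abs{a_i}_\nu \abs{z}_\nu^i = \abs{a_d}_\nu \abs{z}_\nu^d \cdot \abs{a_i/a_d}_\nu \abs{z}_\nu^{i-d} < \abs{a_d}_\nu \abs{z}_\nu^d$ for each of the $d$ lower terms, yielding a crude bound; the extra factor of $2d$ built into $C_{f,\nu}$ ensures that after accounting for the $d$ subtracted terms one still obtains $\abs{f(z)}_\nu > \abs{z}_\nu$. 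The main obstacle, and the only genuinely delicate part, is tracking constants carefully enough in this archimedean estimate so that the $2d$ factor exactly absorbs the triangle-inequality slack; the non-archimedean case and the inductive wandering-point conclusion are then routine. Throughout I would lean on the fact that $\alpha \in \QQ$ and $\nu \in M_\QQ$ so that all quantities are genuine absolute values to which these inequalities apply.
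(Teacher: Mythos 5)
Your proposal is correct and takes essentially the same route as the paper's own proof: reduce to the expansion claim that $\abs{z}_\nu > C_{f,\nu}$ forces $\abs{f(z)}_\nu > \abs{z}_\nu$, conclude by induction, handle the non-archimedean case by strict domination of the leading term under the ultrametric inequality, and use the factor $(2d)_\nu = 2d$ to absorb the archimedean triangle-inequality loss. The constant-tracking you defer closes exactly as you predict, and is what the paper does: since $\abs{z}_\nu > 2d\,\abs{a_i/a_d}_\nu^{1/(d-i)}$, each lower term satisfies $\abs{a_i z^i}_\nu < (2d)^{-1}\abs{a_d z^d}_\nu$, so $\abs{f(z)}_\nu > \tfrac12 \abs{a_d z^d}_\nu$, and then $\abs{z}_\nu > 2d\,\abs{a_d}_\nu^{-1/(d-1)}$ gives $\tfrac12\abs{a_d z^d}_\nu > \tfrac12 (2d)^{d-1}\abs{z}_\nu > \abs{z}_\nu$.
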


\begin{proof}
First, notice that $\alpha$ is a wandering point if and only if $f^n(\alpha)$ is a wandering point for all $n \in \NN$, so without loss of generality, assume  $\abs{\alpha}_\nu>C_{f,\nu}$ for some $\nu \in M_K$. We will show that $\alpha$ is a wandering point by proving that whenever $\abs{\alpha}_\nu>C_{f,\nu}$, we must have $\abs{f(\alpha)}_\nu > \abs{\alpha}_\nu$.

If $\nu$ is non-archimedean, then $\abs{\alpha}_\nu > \abs{\frac{a_i}{a_d}}_\nu^{\frac{1}{d-i}}$ guarantees that 
$\abs{a_d \alpha^d}_\nu >  \abs{a_i \alpha^i}_\nu$
 for all $i<d$, so we have
\[ \abs{f(\alpha)}_\nu = \abs{ \sum_{i=0}^d a_i \alpha^i}_\nu = \abs{a_d \alpha^d}_\nu > \abs{\alpha}_\nu.\]
The inequality above comes from the fact that $\abs{\alpha}_\nu > C_{f,\nu} \geq  \abs{a_d}_\nu^{-\frac{1}{d-1}}$.

If $\nu$ is archimedean, then starting with $\abs{\alpha}_\nu > 2d \abs{\frac{a_i}{a_d}}_\nu^{\frac{1}{d-i}}$, we see that
\[ \abs{a_d \alpha^d}_\nu > \max_{0 \leq i <d}\left\{ (2d)^{d-i} \abs{a_i \alpha^i}_\nu \right\} \geq 2d \max_{0 \leq i <d} \left\{\abs{a_i \alpha^i}_\nu\right\},\]
and so we have
\[ \abs{f(\alpha)}_\nu  =  \abs{ \sum_{i=0}^d a_i \alpha^i}_\nu  \geq \abs{a_d\alpha^d}_\nu - d \max_{0 \leq i <d} \abs{a_i \alpha^i}_\nu > \frac12 \abs{a_d \alpha^d}_\nu.\]
Finally, it follows from $\abs{\alpha}_\nu > 2d \abs{a_d}_\nu^{-\frac{1}{d-1}}$ that
\[ \frac12 \abs{a_d \alpha^d}_\nu > \frac12 (2d)^{d-1} \abs{\alpha}_\nu > \abs{\alpha}_\nu,\]
as desired.
\end{proof}


\subsection{PCF cubics with rational critical points}\label{sec:pcf-belyi-bounds}
We begin by specializing Lemma~\ref{cor1} to bicritical cubic polynomials with rational critical points, using the normal form in Proposition~\ref{familyprop}. 

\begin{lemma}
\label{wanderingpointcor}
Let 
\[
f(z) = a\mathcal{B}_{3,1}+c = a(-2z^3+3z^2) + c \in \QQ[z]
\]
 be a bicritical polynomial and let $\alpha \in \QQ$. If there exist $\nu \in M_\QQ$ and $n\in \NN$ such that 
 $$
 \abs{f^n(\alpha)}_\nu > C_{f,\nu}=
 (6)_\nu \max \left\lbrace 1, \abs{\frac{3}{2}}_\nu, \abs{\frac{1}{2a}}_\nu^{\frac{1}{2}}, \abs{\frac{c}{2a}}_\nu^{\frac{1}{3}}\right\rbrace,
 $$
then $\alpha$ is a wandering point for $f$.  
\end{lemma}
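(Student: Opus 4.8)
The plan is to derive the claimed bound as a direct specialization of Lemma~\ref{cor1} to the polynomial $f(z) = a(-2z^3+3z^2)+c$. Since this $f$ is a cubic polynomial in $\QQ[z]$, Lemma~\ref{cor1} applies verbatim: if $\abs{f^n(\alpha)}_\nu > C_{f,\nu}$ for some $\nu \in M_\QQ$ and some $n \in \NN$, then $\alpha$ is a wandering point. Thus the entire content of the lemma reduces to computing $C_{f,\nu}$ explicitly from the general formula and verifying that it matches the expression in the statement.

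First I would write out the coefficients of $f$ in the notation $f(z) = a_3 z^3 + a_2 z^2 + a_1 z + a_0$. Expanding gives $a_3 = -2a$, $a_2 = 3a$, $a_1 = 0$, and $a_0 = c$. With $d = 3$, the archimedean factor is $(2d)_\nu = (6)_\nu$, matching the statement. Next I would substitute these coefficients into
\[
C_{f,\nu} = (6)_\nu \max\left\{1,\ \abs{\tfrac{a_2}{a_3}}_\nu^{1/1},\ \abs{\tfrac{a_1}{a_3}}_\nu^{1/2},\ \abs{\tfrac{a_0}{a_3}}_\nu^{1/3},\ \abs{a_3}_\nu^{-1/2}\right\}.
\]
Here the ratios $\abs{a_i/a_3}_\nu^{1/(3-i)}$ run over $i = 0,1,2$, and the final term is $\abs{a_3}_\nu^{-1/(d-1)} = \abs{a_3}_\nu^{-1/2}$.

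Then I would simplify each term. Since $a_1 = 0$, the term $\abs{a_1/a_3}_\nu^{1/2} = 0$ drops out of the maximum. For $i = 2$ we get $\abs{a_2/a_3}_\nu = \abs{3a/(-2a)}_\nu = \abs{3/2}_\nu$. For $i = 0$ we get $\abs{a_0/a_3}_\nu^{1/3} = \abs{c/(-2a)}_\nu^{1/3} = \abs{c/(2a)}_\nu^{1/3}$, using that $\abs{-1}_\nu = 1$ at every place. Finally $\abs{a_3}_\nu^{-1/2} = \abs{-2a}_\nu^{-1/2} = \abs{2a}_\nu^{-1/2} = \abs{1/(2a)}_\nu^{1/2}$. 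Collecting these yields exactly
\[
C_{f,\nu} = (6)_\nu \max\left\{1,\ \abs{\tfrac{3}{2}}_\nu,\ \abs{\tfrac{1}{2a}}_\nu^{1/2},\ \abs{\tfrac{c}{2a}}_\nu^{1/3}\right\},
\]
as claimed, and Lemma~\ref{cor1} delivers the conclusion.

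There is no genuine obstacle here; the only points requiring minor care are the bookkeeping of which term in the general maximum corresponds to which coefficient, the vanishing of the $a_1$ term, and the harmless absorption of signs via $\abs{-1}_\nu = 1$. The main thing to double-check is that the last slot $\abs{1/(2a)}_\nu^{1/2}$ indeed comes from the $\abs{a_d}_\nu^{-1/(d-1)}$ term and agrees in exponent with the $i=1$ ratio it is written beside, so that no term is double-counted or omitted.
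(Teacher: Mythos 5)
Your proposal is correct and matches the paper's approach: the paper's proof is exactly the one-line observation that the result follows from applying the definition of $C_{f,\nu}$ and Lemma~\ref{cor1} to the coefficients of $f$, and your write-up simply makes the coefficient bookkeeping ($a_3 = -2a$, $a_2 = 3a$, $a_1 = 0$, $a_0 = c$) explicit. Your computation of each term in the maximum, including the vanishing $a_1$ term and the sign absorption via $\abs{-1}_\nu = 1$, is accurate.
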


\begin{proof}
The result follows immediately from applying the definition of  $C_{f,\nu}$ and Lemma~\ref{cor1} to the coefficients of $f(z)$.
\end{proof}

\begin{remark}
\label{remarkaboutabs}
 Let 
 $f(z) =  a(-2z^3+3z^2) + c \in \QQ[z]$, so $\Crit(f) = \{0,1\}$.  
 If $f$ is PCF then  every element in the orbits of $0$ and $1$ must be bounded by $C_{f,\nu}$. 
 In particular, 
 \[
 \abs{f(1)}_{nu} = \abs{a+c}_\nu\leq C_{f,\nu} \text{ and } \abs{f(0)}_\nu = \abs{c}_\nu\leq C_{f,\nu}.
 \]
  Thus if $f$ is PCF, then $\max\lbrace \abs{c}_\nu, \abs{a+c}_\nu \rbrace \leq C_{f,\nu}$ for all $\nu \in M_\QQ$. 
For every non-archimedean place $\nu$, this means
 $\max \lbrace\abs{a}_\nu, \abs{c}_\nu \rbrace \leq C_{f,\nu}$.
\end{remark}

Using the bound given above, we can find bounds on the absolute values of the parameters $a$ and $c$ of a PCF polynomial of the form $f(z) =  a(-2z^3+3z^2) + c  \in \QQ[z]$.
We begin with an archimedean bound on the parameter $a$.

\begin{lemma}
\label{lem:a-arch}
Let $f(z) = a (-2z^3+3z^2) + c \in \QQ[z]$. If $f$ is PCF, then $\abs{a}<4$. 
\end{lemma}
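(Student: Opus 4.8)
The plan is to bound $\abs{a}$ at the archimedean place by using the post-critical boundedness condition together with the explicit bound $C_{f,\nu}$ from Lemma~\ref{wanderingpointcor}. Write $\nu$ for the unique archimedean place of $\QQ$ and drop the subscript, so $\abs{\cdot}$ denotes the ordinary absolute value. First I would record, via Remark~\ref{remarkaboutabs}, that if $f$ is PCF then $\abs{c} \leq C_{f,\nu}$ and $\abs{a+c} \leq C_{f,\nu}$, and that at the archimedean place
\[
C_{f,\nu} = 6\max\left\{1, \tfrac{3}{2}, \abs{\tfrac{1}{2a}}^{1/2}, \abs{\tfrac{c}{2a}}^{1/3}\right\}.
\]
The strategy is a proof by contradiction: assume $\abs{a}\geq 4$ and show that one of the critical orbit points escapes the bound $C_{f,\nu}$, forcing $f$ to have a wandering critical point and hence not be PCF.

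The key computational input is the value of $f$ on the critical points. Since $\Crit(f)=\{0,1\}$ with $f(0)=c$ and $f(1)=a+c$, and since the \emph{other} relevant orbit points are $f^2(0)=f(c)$ and $f^2(1)=f(a+c)$, the most useful escape estimate should come from evaluating $f$ at a point whose absolute value is already forced to be large when $\abs{a}$ is large. My plan is to examine $\abs{f(1)}=\abs{a+c}$ and $\abs{f(0)}=\abs{c}$ and argue that when $\abs{a}\geq 4$, the quantity $C_{f,\nu}$ is too small to simultaneously accommodate both $\abs{c}\leq C_{f,\nu}$ and $\abs{a+c}\leq C_{f,\nu}$. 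Concretely: if $\abs{a}$ is large then $\abs{\tfrac{1}{2a}}^{1/2}$ is small, so unless $\abs{c}$ is comparably large the dominant term in the max defining $C_{f,\nu}$ is just the constant $\tfrac{3}{2}$, giving $C_{f,\nu}=9$; but then $\abs{c}\leq 9$ and $\abs{a+c}\leq 9$ together force $\abs{a}\leq 18$, which is not yet a contradiction, so the argument must instead push one more iterate. The cleaner route is to feed the large point back through $f$: set $\beta = f(1) = a+c$ or $\beta=f(0)=c$, whichever has larger absolute value, and show $\abs{f(\beta)} > C_{f,\nu}$ using the leading-term domination $\abs{f(\beta)} \approx 2\abs{a}\abs{\beta}^3$ once $\abs{\beta}$ exceeds the threshold, invoking Lemma~\ref{cor1}/Lemma~\ref{wanderingpointcor} to conclude $\beta$ (hence the critical point mapping to it) is wandering.

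The main obstacle I anticipate is the case analysis in the definition of $C_{f,\nu}$: the fourth term $\abs{c/2a}^{1/3}$ couples $a$ and $c$, so one cannot treat $\abs{a}$ large while leaving $\abs{c}$ free without tracking how $C_{f,\nu}$ itself grows with $\abs{c}$. I would split into the case where $\abs{c}$ is bounded (so $C_{f,\nu}$ is essentially constant and the two conditions $\abs{c},\abs{a+c}\leq C_{f,\nu}$ pin down $a$ directly) and the case where $\abs{c}$ is large (so $C_{f,\nu}\sim \abs{c/2a}^{1/3}$ and one checks the self-consistency inequality $\abs{c}\leq 6\abs{c/2a}^{1/3}$ fails, or equivalently the cubic growth of $f$ at $c$ dominates). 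Carefully choosing which orbit point to track and verifying that the threshold $\abs{a}<4$ is exactly what makes the escape inequality tight is the delicate part; the rest is bounding $\abs{f(\beta)}$ below by its leading term via the triangle inequality, which is routine.
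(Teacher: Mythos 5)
Your plan is sound and completable, but it takes a genuinely different route from the paper's proof. The paper never touches $C_{f,\nu}$ in this lemma: it shows directly that when $\abs{a}\geq 4$, the region $\left\{\abs{\alpha}\geq\max\{\abs{c},2\}\right\}$ is forward-invariant with $\abs{f(\alpha)}>\abs{\alpha}$ on it, and then checks that a critical value lands there (if $\abs{c}\geq 2$ take $f(0)=c$; otherwise $\abs{a+c}\geq\abs{a}-\abs{c}>2$ and take $f(1)=a+c$), so a critical point wanders. Your route through Lemma~\ref{wanderingpointcor} does work, and your two key maneuvers are the right ones. First, the self-consistency elimination of the coupled branch of the max: if $C_{f,\nu}=6\abs{c/(2a)}^{1/3}>9$, then PCF forces $\abs{c}\leq C_{f,\nu}$, i.e.\ $\abs{c}^2\leq 108/\abs{a}\leq 27$, contradicting $\abs{c}>\tfrac{27}{4}\abs{a}\geq 27$; hence $C_{f,\nu}=9$ once $\abs{a}\geq 4$. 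Second, the extra iterate: your interim observation that $\abs{c},\abs{a+c}\leq 9$ only yields $\abs{a}\leq 18$ is a dead end you correctly abandon. To close the escape estimate in a single step, use the facts (implicit in your choice of $\beta$ as the larger of $c$ and $a+c$) that $\abs{\beta}\geq\abs{c}$ and $\abs{\beta}\geq\abs{a}/2\geq 2$; then
\[
\abs{f(\beta)}\;\geq\;\abs{a}\,\abs{\beta}^2\left(2\abs{\beta}-3\right)-\abs{c}\;\geq\;4\abs{\beta}^2\left(2\abs{\beta}-3\right)-\abs{\beta}\;\geq\;14\;>\;9\;=\;C_{f,\nu},
\]
so Lemma~\ref{cor1} makes $0$ or $1$ wandering; your worry that the threshold $\abs{a}<4$ makes the inequality delicate is unfounded, as the constants are comfortable. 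Comparing the two: the paper's invariant-region argument is shorter, self-contained, and its escape estimate is reused elsewhere (the same device bounds $\abs{c}$ in Proposition~\ref{prop:Belyiformalist}); your argument runs entirely on the already-proved machinery of Lemmas~\ref{cor1} and~\ref{wanderingpointcor} and yields the explicit uniform bound $C_{f,\nu}=9$ on the whole postcritical set, at the cost of the case analysis needed to disentangle the $\abs{c/(2a)}^{1/3}$ term.
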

\begin{proof}
Suppose $\abs{a}\geq 4$ and $\abs{\alpha}\geq \max\{\abs{c}, 2\}$. Then
$$\abs{f(\alpha)} = \abs{a\alpha^{d-1}\left(-(d-1)\alpha+d\right)+c},$$ and a straightforward calculation shows that 
$\abs{f(\alpha)} >\abs{\alpha}$.
%
If $\abs{c}\geq 2$, then $0$ must be a wandering point. If $\abs{c}<2$, then 
$$\abs{a+c}\geq \abs{a}-\abs{c}>2,$$ 
so $1$ must be a wandering point. Thus, if $f$ is PCF, we must have $\abs{a}<4$.
\end{proof}

The following lemmas  give  $p$-adic bounds on the parameters $a$ and $c$ when $f$ is PCF.

\begin{lemma}
\label{cfvnonarch}
If $f(z) =  a(-2z^3+3z^2) + c \in \QQ[z]$ is PCF then for non-archimedean $\nu \in M_\QQ$  
$$
C_{f,\nu} = \max  \left \lbrace 1, \abs{\frac 3 2}_\nu, \abs{\frac{1}{2a}}_\nu^{\frac{1}{2}} \right \rbrace.
$$ 
\end{lemma}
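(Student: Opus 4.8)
The plan is to recognize that this lemma merely asserts that the fourth term $\abs{\frac{c}{2a}}_\nu^{1/3}$ in the definition of $C_{f,\nu}$ from Lemma~\ref{wanderingpointcor} is redundant once $f$ is known to be PCF. First I would record that for non-archimedean $\nu$ we have $(6)_\nu = 1$, so that
\[
C_{f,\nu} = \max\left\{1, \abs{\tfrac{3}{2}}_\nu, \abs{\tfrac{1}{2a}}_\nu^{1/2}, \abs{\tfrac{c}{2a}}_\nu^{1/3}\right\}.
\]
Writing $M = \max\{1, \abs{\tfrac{3}{2}}_\nu, \abs{\tfrac{1}{2a}}_\nu^{1/2}\}$, the claim is exactly that $\abs{\tfrac{c}{2a}}_\nu^{1/3} \leq M$, since then the fourth term never attains the maximum and $C_{f,\nu} = M$.

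The single input I would draw from the PCF hypothesis is the non-archimedean bound $\abs{c}_\nu \leq C_{f,\nu}$ furnished by Remark~\ref{remarkaboutabs}. I would then argue by contradiction: suppose $\abs{\tfrac{c}{2a}}_\nu^{1/3} > M$, so that this term realizes the maximum and $C_{f,\nu} = \abs{\tfrac{c}{2a}}_\nu^{1/3} > M \geq 1$ (in particular $c \neq 0$). Substituting $\abs{c}_\nu = \abs{\tfrac{c}{2a}}_\nu \cdot \abs{2a}_\nu$ into $\abs{c}_\nu \leq C_{f,\nu} = \abs{\tfrac{c}{2a}}_\nu^{1/3}$ and dividing through by the positive quantity $\abs{\tfrac{c}{2a}}_\nu^{1/3}$ yields $\abs{\tfrac{c}{2a}}_\nu^{2/3}\abs{2a}_\nu \leq 1$, hence $\abs{\tfrac{c}{2a}}_\nu^{1/3} \leq \abs{2a}_\nu^{-1/2} = \abs{\tfrac{1}{2a}}_\nu^{1/2} \leq M$. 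This contradicts the assumption $\abs{\tfrac{c}{2a}}_\nu^{1/3} > M$, so the fourth term can indeed be discarded.

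The only genuine subtlety — and the step I would handle most carefully — is that the bound $\abs{c}_\nu \leq C_{f,\nu}$ from the Remark itself involves $C_{f,\nu}$, and hence a priori the very term $\abs{\tfrac{c}{2a}}_\nu^{1/3}$ we are trying to eliminate. The argument is therefore mildly circular, and the contradiction framing is exactly what breaks the circularity: assuming the $c$-term dominates lets me replace $C_{f,\nu}$ by the explicit expression $\abs{\tfrac{c}{2a}}_\nu^{1/3}$ and convert the bound into usable algebra. Beyond that, I expect only routine exponent bookkeeping — keeping the cube roots and square roots aligned, and confirming that the division by $\abs{\tfrac{c}{2a}}_\nu^{1/3}$ preserves the inequality (valid since that quantity exceeds $1$) — to require any care.
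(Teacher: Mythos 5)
Your proposal is correct and follows essentially the same route as the paper: both argue by contradiction that if $\abs{\frac{c}{2a}}_\nu^{1/3}$ were the (strict) maximum, then the PCF bound $\abs{c}_\nu \leq C_{f,\nu} = \abs{\frac{c}{2a}}_\nu^{1/3}$ from Remark~\ref{remarkaboutabs} forces $\abs{c}_\nu^2 \leq \abs{\frac{1}{2a}}_\nu$, contradicting the assumed domination of the $c$-term, which is exactly your inequality $\abs{\frac{c}{2a}}_\nu^{1/3} \leq \abs{\frac{1}{2a}}_\nu^{1/2}$ in rearranged form. Your write-up is in fact slightly more careful than the paper's (explicitly noting $c \neq 0$ and comparing against all three remaining terms rather than only $\abs{\frac{1}{2a}}_\nu^{1/2}$), but the idea is identical.
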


\begin{proof}
Let $f(z) = a(-2z^3+3z^2) + c \in \QQ[z] $ and $\nu\in M_\QQ$ be non-archimedean.
From Lemma~\ref{wanderingpointcor},
\[
C_{f,\nu} = \max \left\lbrace 1, \abs{\frac{3}{2}}_\nu, \abs{\frac{1}{2a}}_\nu^{\frac{1}{2}}, \abs{\frac{c}{2a}}_\nu^{\frac{1}{3}}\right\rbrace.
\]
Suppose 
\[
C_{f,\nu} =
\abs{\frac{c}{2a}}_\nu^{\frac{1}{3}} > \abs{\frac{1}{2a}}_\nu^{\frac{1}{2}}; \quad \text{ then }
 \abs{c}_\nu^{2} > \abs{\frac{1}{2a}}_\nu.
\]

However, since $f$ is PCF,   
\[
\abs{c}_\nu \leq C_{f,\nu}= \abs{\frac{c}{2a}}_\nu^{\frac{1}{3}}, \quad \text{ so } \abs{c}_\nu^{2} \leq \abs{\frac{1}{2a}}_\nu,
\]
giving  a contradiction. 
\end{proof}
Notice that the statement above holds for $a,c \in K$ and $\nu \in M_K$ for any number field $K$ and the proof is identical. 

\begin{lemma}
\label{prodlemma}
Let $f(z) = a(-2z^3+3z^2) + c \in \QQ[z]$ be PCF, let $p$ be an odd prime, and let $\abs{\cdot}_p$ be the $p$-adic absolute value. 
Then $|a|_p\leq 1$ and $\abs{c}_p^{2}\leq \abs{a}_p^{-1}$.
\end{lemma}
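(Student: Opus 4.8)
The plan is to combine the simplified formula for $C_{f,p}$ from Lemma~\ref{cfvnonarch} with the boundedness of the critical orbit recorded in Remark~\ref{remarkaboutabs}. The crucial observation is that for an odd prime $p$ the term $\abs{3/2}_p$ never contributes to the maximum, so $C_{f,p}$ depends only on $\abs{a}_p$.

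First I would specialize the formula from Lemma~\ref{cfvnonarch} to the $p$-adic absolute value for odd $p$. Since $\abs{2}_p = 1$ and $\abs{3}_p \leq 1$ whenever $p$ is odd, we get $\abs{3/2}_p \leq 1$, and also $\abs{1/(2a)}_p^{1/2} = \abs{a}_p^{-1/2}$. Hence the formula collapses to
\[
C_{f,p} = \max\left\{1, \abs{a}_p^{-1/2}\right\}.
\]
By Remark~\ref{remarkaboutabs}, since $f$ is PCF we have $\max\left\{\abs{a}_p, \abs{c}_p\right\} \leq C_{f,p}$ at every non-archimedean place.

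Next I would establish $\abs{a}_p \leq 1$ by contradiction. If $\abs{a}_p > 1$, then $\abs{a}_p^{-1/2} < 1$, forcing $C_{f,p} = 1$; but then the inequality $\abs{a}_p \leq C_{f,p}$ reads $\abs{a}_p \leq 1$, contradicting $\abs{a}_p > 1$. Therefore $\abs{a}_p \leq 1$. Knowing this gives $\abs{a}_p^{-1/2} \geq 1$, so in fact $C_{f,p} = \abs{a}_p^{-1/2}$. The remaining bound $\abs{c}_p \leq C_{f,p}$ then reads $\abs{c}_p \leq \abs{a}_p^{-1/2}$, and squaring yields $\abs{c}_p^{2} \leq \abs{a}_p^{-1}$, as desired.

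The argument is short, and the only step requiring genuine care is the computation of $C_{f,p}$: one must confirm that the $\abs{3/2}_p$ term is dominated, which is precisely where the hypothesis that $p$ is odd (so that $2$ is a unit and $\abs{3}_p \leq 1$) enters. Everything after that is a direct manipulation of the two inequalities $\abs{a}_p \leq C_{f,p}$ and $\abs{c}_p \leq C_{f,p}$, so I do not anticipate any real obstacle.
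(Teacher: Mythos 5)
Your proposal is correct and follows essentially the same route as the paper: both specialize Lemma~\ref{cfvnonarch} to odd $p$ (noting $\abs{2}_p=1$ and $\abs{3}_p\leq 1$ collapse $C_{f,p}$ to $\max\{1,\abs{a}_p^{-1/2}\}$) and then combine this with $\max\{\abs{a}_p,\abs{c}_p\}\leq C_{f,p}$ from Remark~\ref{remarkaboutabs}. The only cosmetic difference is that you deduce $\abs{a}_p\leq 1$ by contradiction and then handle $\abs{c}_p$ uniformly via $C_{f,p}=\abs{a}_p^{-1/2}$, whereas the paper splits into the two cases $C_{f,p}=1$ and $C_{f,p}=\abs{a}_p^{-1/2}>1$; the content is identical.
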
 

\begin{proof}
From Lemma \ref{cfvnonarch},
$$
C_{f,p}=
\max  \left \lbrace 1, \abs{\frac 3 2}_p, \abs{\frac{1}{2a}}_p^{\frac{1}{2}} \right \rbrace
 =\max  \left \lbrace 1, \abs{3}_p, \abs{a}_p^{-\frac{1}{2}} \right \rbrace 
 =\max  \left \lbrace 1, \abs{a}_p^{-\frac{1}{2}} \right \rbrace.
 $$

There are two distinct cases:
\begin{enumerate}
    \item[(1)] $C_{f,p} = 1$, or
    \item[(2)] $C_{f,p} = \abs{a}_p^{-\frac{1}{2}} >1.$
\end{enumerate}

First, suppose $C_{f, p} =1 \geq \abs{a}_p^{-\frac{1}{2}}$. Then 
 $\abs{a}_p\geq 1$. 
 However, since $f$ is PCF, $$\abs{a}_p, \abs{c}_p \leq C_{f,p}=1.$$ Therefore
$\abs{a}_p = 1$, $\abs{a}_p^{-1}= 1$, and $\abs{c}_p^{2}\leq 1=\abs{a}_p^{-1} $. 

Now, suppose $C_{f,p} = \abs{a}_p^{-\frac{1}{2}}>1$.  Then $\abs{a}_p<1$, as desired. Furthermore, since $f$ is PCF, 
\[
\abs{c}_p \leq C_{f,p}=\abs{a}_p^{-\frac{1}{2}}.\qedhere
\]
\end{proof}

\begin{lemma}
\label{kis1}
Let $f(z) = a(-2z^3+3z^2) + c \in \QQ[z]$ be PCF. Then 
\[
\abs{2a}_2\leq 1 \quad \text{ and } \quad \abs{2c}_2\leq 1.
\]
In fact, $2a \in \ZZ$.
\end{lemma}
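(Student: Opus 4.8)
The plan is to determine $C_{f,2}$ exactly and then read the two bounds off from post-critical boundedness at the place $\nu = 2$. By Lemma~\ref{cfvnonarch} (whose proof is stated to hold at every non-archimedean place),
\[
C_{f,2} = \max\left\{1, \abs{\frac{3}{2}}_2, \abs{2a}_2^{-1/2}\right\} = \max\left\{2, \abs{2a}_2^{-1/2}\right\},
\]
since $\abs{3/2}_2 = 2$. The bound $\abs{2a}_2 \le 1$ is then easy: if instead $\abs{2a}_2 > 1$, then $\abs{2a}_2^{-1/2} < 1 < 2$, so $C_{f,2} = 2$, and Remark~\ref{remarkaboutabs} forces $\abs{a}_2 \le C_{f,2} = 2$, contradicting $\abs{a}_2 = 2\abs{2a}_2 > 2$. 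The real work is the bound on $c$, for which I need to rule out the regime where the second entry of the maximum dominates, i.e. to show that $C_{f,2}$ is not strictly larger than $2$; equivalently, that $\abs{2a}_2 \ge 1/4$, i.e. $\ord_2(a) \le 1$.

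The main step, which I expect to be the crux, is exactly this: ruling out $\ord_2(a) \ge 2$. A purely $2$-adic orbit-escape argument does not seem to settle this case, because when $a$ is very close to $0$ at $2$ the critical orbit need not grow $2$-adically, so the idea is instead to import the bounds already established at the other places. By Lemma~\ref{prodlemma}, $\abs{a}_p \le 1$ for every odd prime $p$, so $a$ is integral away from $2$. If in addition $\ord_2(a) \ge 2$, then $a$ is integral at every place, hence $a \in \ZZ$, and $4 \mid a$; since $a \neq 0$ for a genuine cubic, this gives $\abs{a} \ge 4$, contradicting the archimedean bound $\abs{a} < 4$ of Lemma~\ref{lem:a-arch}. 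Therefore $\ord_2(a) \le 1$, i.e. $\abs{2a}_2 \ge 1/4$, and substituting into the display above forces $C_{f,2} = 2$.

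With $C_{f,2} = 2$ in hand, both bounds follow simultaneously: Remark~\ref{remarkaboutabs} gives $\abs{a}_2, \abs{c}_2 \le C_{f,2} = 2$, whence $\abs{2a}_2 = \frac12\abs{a}_2 \le 1$ and $\abs{2c}_2 = \frac12\abs{c}_2 \le 1$. Finally, for the integrality claim, $\abs{2a}_p = \abs{a}_p \le 1$ for every odd prime $p$ by Lemma~\ref{prodlemma}, and $\abs{2a}_2 \le 1$ as just shown, so $2a$ is integral at every place of $\QQ$ and hence $2a \in \ZZ$.
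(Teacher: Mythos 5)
Your proof is correct and takes essentially the same route as the paper: both determine $C_{f,2}=\max\left\{2,\abs{2a}_2^{-1/2}\right\}$ from Lemma~\ref{cfvnonarch} and rule out the regime $C_{f,2}>2$ (equivalently $\ord_2(a)\geq 2$) by combining the archimedean bound $\abs{a}<4$ of Lemma~\ref{lem:a-arch} with integrality at the odd primes from Lemma~\ref{prodlemma}, then read off $\abs{2a}_2, \abs{2c}_2 \leq 1$ from $\abs{a}_2,\abs{c}_2\leq C_{f,2}=2$. The only differences are cosmetic: where the paper enumerates the candidates $a\in\left\{\frac n2 : 1\leq\abs{n}<8\right\}$ and checks none satisfies $\abs{2a}_2<\frac14$, you argue directly that $\ord_2(a)\geq 2$ would force $a\in 4\ZZ\setminus\{0\}$ and hence $\abs{a}\geq 4$, and your preliminary step ruling out $\abs{2a}_2>1$ is harmless but redundant once $C_{f,2}=2$ is in hand.
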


\begin{proof}
From  Lemma~\ref{prodlemma}, we have  $\abs{2a}_p\leq 1$ for all odd primes $p$, so $2a \in \ZZ$ will follow immediately once we know that $|2a|_2 \leq 1$.

From Lemma \ref{cfvnonarch}, 
\begin{equation}
C_{f,2} =
 \max  \left \lbrace 1, \abs{\frac 3 2}_2, \abs{\frac{1}{2a}}_2^{\frac{1}{2}} \right \rbrace
 = \max  \left \lbrace  2, \abs{\frac{1}{2a}}_2^{\frac{1}{2}}  \right \rbrace.
\end{equation}

Suppose $C_{f,2} =2$: 
Since $f$ is PCF, both $\abs{a}_2$ and $\abs{c}_2 \leq 2.$ Therefore, both $\abs{2a}_2$ and $\abs{2c}_2\leq 1$ as desired.

Suppose $C_{f,2} =\abs{\frac{1}{2a}}_2^{\frac{1}{2}} > 2 $: Then
\begin{equation}
     \abs{2a}_2 < \frac{1}{4} < 1. \label{a(d-1)lessthan1}
\end{equation}

By Lemma~\ref{lem:a-arch},  $\abs{a}<4$, so since $2a \in \ZZ$, we must have
\begin{equation}
\label{alessthan4}
 a \in \left\lbrace \frac{n}{2}: 1\leq \abs{n} <8 \right\rbrace. 
\end{equation}
 However, all of these possible $a$-values fail to satisfy equation~\eqref{a(d-1)lessthan1}, so the case $C_{f,2} =\abs{\frac{1}{2a}}_2^{\frac{1}{2}} > 2 $ does not happen.
Therefore, if $f$ is PCF then $C_{f,2} =2$, and both
$\abs{2a}_2 $ and $\abs{2c}_2\leq 1$ as desired. 
\end{proof}

\begin{prop}
\label{prop:Belyiformalist}
If $f$ is a cubic PCF polynomial of the form $a\mathcal{B}_{d,k}(z)+c \in \QQ[z]$, then 
$$
\pm a\in \left\lbrace \frac12,  1,  \frac 32,  2,  \frac52,  3,  \frac{7}{2}\right\rbrace
\text{ and } 
\pm c \in \left\lbrace 0 ,  1,  \frac 12,  \frac 32,  2 \right\rbrace.$$
\end{prop}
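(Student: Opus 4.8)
The plan is to assemble the non-archimedean estimates already proved in this section with the single archimedean bound on $a$ from Lemma~\ref{lem:a-arch}, and then to supply the one genuinely new ingredient: an archimedean bound on $c$. Since $f$ is cubic we have $d=3$ and $k=1$, so $\mathcal{B}_{d,k}=\mathcal{B}_{3,1}=-2z^3+3z^2$ and every lemma of this section applies to $f(z)=a(-2z^3+3z^2)+c$.

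First I would dispose of $a$. Lemma~\ref{kis1} gives $2a\in\ZZ$, and since $f$ is genuinely cubic we have $a\neq 0$, hence $|a|\geq\tfrac12$. Combining this with the bound $|a|<4$ of Lemma~\ref{lem:a-arch} yields $|2a|<8$, so $2a\in\{\pm1,\dots,\pm7\}$, which is exactly $\pm a\in\{\tfrac12,1,\tfrac32,2,\tfrac52,3,\tfrac72\}$. Next I would bound $c$ at the finite places. Write $a=s/2$ with $s=2a\in\ZZ$ and $1\leq|s|\leq7$. For an odd prime $p$, Lemma~\ref{prodlemma} gives $|c|_p^2\leq|a|_p^{-1}=|s|_p^{-1}$. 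If $p\nmid s$ this is just $|c|_p\leq1$; if $p\mid s$, then $|s|\leq7$ forces $p\in\{3,5,7\}$ with $v_p(s)=1$, so $|c|_p^2\leq p$ and hence $|c|_p\leq1$ (the largest integer power of $p$ not exceeding $\sqrt p$ is $p^0=1$). Thus $|c|_p\leq1$ for every odd $p$, while Lemma~\ref{kis1} gives $|c|_2\leq2$; together these show $2c\in\ZZ$.

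The main obstacle is the archimedean bound on $c$, since the lemmas control $a$ but not $c$ at the infinite place, so a fresh escape argument is needed. I would show directly that $|c|<\tfrac52$. Assume $|c|\geq\tfrac52$ and track the orbit of the critical point $0$, noting $f(0)=c$. Using $|a|\geq\tfrac12$, for any $w$ with $|w|\geq|c|$ one estimates
\[
|f(w)|\geq 2|a|\,|w|^3-3|a|\,|w|^2-|c|\geq |w|\bigl(|a|\,|w|(2|w|-3)-1\bigr),
\]
and since $|a|\,|w|(2|w|-3)\geq \tfrac12\cdot\tfrac52\cdot 2=\tfrac52$, this gives $|f(w)|\geq\tfrac32|w|>|w|\geq|c|$. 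Starting from $f(0)=c$ with $|c|\geq\tfrac52$ and inducting, the orbit of $0$ is strictly increasing and stays at least $\tfrac52$, hence is unbounded, so $f$ is not PCF. Therefore if $f$ is PCF then $|c|<\tfrac52$.

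Finally, combining $2c\in\ZZ$ with $|c|<\tfrac52$ forces $2c\in\{0,\pm1,\pm2,\pm3,\pm4\}$, that is, $\pm c\in\{0,1,\tfrac12,\tfrac32,2\}$, completing the proof. I expect the assembly of $a$ and the finite-place analysis of $c$ to be routine given the preceding lemmas; the escape estimate of the third paragraph, which crucially exploits $|a|\geq\tfrac12$ coming from $2a\in\ZZ\setminus\{0\}$, is the one step requiring new work.
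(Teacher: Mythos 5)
Your proposal is correct and follows essentially the same route as the paper's own proof: the list for $a$ comes from combining $2a\in\ZZ$ (Lemma~\ref{kis1}) with the archimedean bound $|a|<4$ (Lemma~\ref{lem:a-arch}), the bound $|c|_p\leq 1$ at odd primes comes from Lemma~\ref{prodlemma} together with $|a|_p^{-1}\leq p$ for $p\in\{3,5,7\}$, the bound $|2c|_2\leq 1$ comes from Lemma~\ref{kis1}, and the final ingredient is a direct archimedean escape argument showing $|c|<\frac{5}{2}$ so that $f(0)=c$ would otherwise wander. The only difference is cosmetic: you write out explicitly the growth estimate $|f(w)|\geq |w|\bigl(|a|\,|w|(2|w|-3)-1\bigr)$ that the paper dismisses as a ``straightforward calculation,'' and your verification of it is sound.
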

\begin{proof}
The result for $a$ follows from equation~\eqref{alessthan4} and  Lemma \ref{kis1}. 

Given the list for $a$, we see that $\abs{a}_p = 1$ for any prime $p \not\in\lbrace 2,3,5,7\rbrace$.  For $p \in \lbrace3,5,7\rbrace$, we have $\abs{a}_p\geq \frac{1}{p}$, so $\abs{a}_p^{-1}\leq p$.  Using Lemma~\ref{prodlemma}, we conclude that  $\abs{c}_p \leq 1$ in both cases.
 Combining this with the fact that $|2c|_2  \leq 1$ from Lemma~\ref{kis1}, we see that $\abs{2c}_p\leq 1$ for all primes $p$. That is,  $2c\in\ZZ$.

We will show that $|c|< \frac 52$. Suppose that $a$ is contained in the above list and $|\alpha|\geq |c|\geq\frac{5}{2}$. Then
 $$
 \abs{f(\alpha)}
 \geq \abs{a}\abs{\alpha}^2 \abs{-2\alpha+3}-\abs{c},$$ 
 and this implies $\abs{f(\alpha)} > \abs{\alpha}$. 
Hence $\alpha$ is a wandering point for $f$. Then $ c = f(0)$ must be a wandering point for $f$, in which case $f$ would not be PCF. The result for $c$ follows. 
\end{proof}

\subsection{PCF cubics with irrational critical points}\label{sec:pcf-non-belyi-bounds}
As in Section~\ref{sec:pcf-belyi-bounds}, we can use the bound $C_{f,\nu}$ to find bounds on the (archimedean and non-archimedean) absolute values of the parameters $a,c$ and $D$ of a PCF polynomial of the form $f(z) = a\mathcal{P}_{3,D}+c \in \QQ[z]$. Unlike in Section ~\ref{sec:pcf-belyi-bounds}, the bounds are not given explicitly. Instead, we will determine restrictions on the relationships between the three parameters. In Theorem~\ref{cubicbicriticalthm-irrat}, we  use these relationships to implement an algorithm that determines a finite set of triples $(D,a,c)$ for which the polynomial $f(z) = a\mathcal{P}_{3,D}+c \in \QQ[z]$ is possibly PCF. 

\begin{prop} \label{prop:aD_vals}
Let $f(z) = a(z^3/3 - Dz) + c \in \QQ[z]$. If $f$ is PCF, then 
\[
\pm aD \in \left\{ \frac 3 4, \frac 3 2, \frac 9 4, 3, \frac{15}{4}, \frac 9 2, \frac{21}{4} \right\}.
\]
\end{prop}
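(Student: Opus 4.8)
The plan is to avoid redoing the place-by-place analysis and instead reduce directly to the bounds already obtained for the Belyi normal form in Section~\ref{sec:pcf-belyi-bounds}. The map $f(z) = a(z^3/3 - Dz) + c$ has critical points $\pm\sqrt{D}$ lying in the quadratic field $L = \QQ(\sqrt D)$, and these are the two distinct critical points of a bicritical cubic. The key observation, exactly as in Example~\ref{ex:hasaut}, is that \emph{over $L$} this is the rational-critical-point situation: conjugating by the affine map $\psi(z) = \frac{z - \sqrt D}{-2\sqrt D} \in \PGL_2(L)$ moves the critical points to $0$ and $1$, so $f^\psi \in L[z]$ is a cubic with critical points $\{0,1\}$. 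By the computation underlying Proposition~\ref{familyprop} applied over $L$, this forces $f^\psi = a'\mathcal{B}_{3,1}(z) + c'$ for some $a', c' \in L$.

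The crucial step is identifying $a'$ by comparing leading coefficients. The leading coefficient of $f$ is $a/3$, the slope of $\psi$ is $-1/(2\sqrt D)$, and conjugating a cubic by an affine map of slope $s$ scales its leading coefficient by $s^{-2}$; here $s^{-2} = 4D$, so $f^\psi$ has leading coefficient $(a/3)(4D) = \frac{4aD}{3}$. Since $\mathcal{B}_{3,1}(z) = -2z^3 + 3z^2$ has leading coefficient $-2$, matching gives
\[
a' = -\frac{2aD}{3} \in \QQ.
\]
Note that although $\psi$ is defined only over $L$, the Belyi parameter $a'$ is \emph{rational} (the constant $c'$ need not be, but we will not use it).

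Since conjugation preserves the PCF property, $f^\psi$ is a PCF cubic in Belyi form over $L$. I would then invoke only the $a'$-conclusion of Proposition~\ref{prop:Belyiformalist} (via Lemmas~\ref{lem:a-arch}, \ref{cfvnonarch}, \ref{prodlemma}, and~\ref{kis1}) to get $2a' \in \ZZ$ with $|a'| < 4$, hence $2a' \in \{\pm 1, \pm 2, \ldots, \pm 7\}$. Because $a \neq 0$ and $D \neq 0$ force $a' \neq 0$, substituting $2a' = -\frac{4aD}{3}$ yields $|aD| = \frac34\lvert 2a'\rvert$, so
\[
\pm aD \in \left\{\frac34, \frac32, \frac94, 3, \frac{15}{4}, \frac92, \frac{21}{4}\right\},
\]
as claimed.

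The main obstacle is that Lemmas~\ref{lem:a-arch}--\ref{kis1} and Proposition~\ref{prop:Belyiformalist} are stated over $\QQ$, whereas $f^\psi$ is defined over $L$, so I must check they transfer. Several pieces are immediate: the remark after Lemma~\ref{cfvnonarch} already records that the simplification of $C_{f,\nu}$ holds over any number field, the archimedean growth estimate of Lemma~\ref{lem:a-arch} is valid at any archimedean place, and the critical-value inequalities of Remark~\ref{remarkaboutabs} hold verbatim since $c' = f^\psi(0)$ and $a'+c' = f^\psi(1)$ are still critical values bounded by $C_{f^\psi,w}$. The two genuinely delicate points are (i) normalizing the absolute values of $L$ so that conclusions about the \emph{rational} number $a'$ descend to the rational primes, which one arranges by taking the extension of each $|\cdot|_p$ that restricts to $|\cdot|_p$ on $\QQ$ so that $|a'|_w = |a'|_p$, and (ii) the prime $2$, which may ramify in $L$, in the $L$-analogue of Lemma~\ref{kis1}. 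Both are routine once the normalization is fixed, but they are where care is required. (Alternatively, one could mirror the Belyi lemmas directly for $\mathcal{P}_{3,D}$, but obtaining the needed upper bound on $|a|_\nu$ is awkward there, whereas in the Belyi form it comes for free from the critical value $f^\psi(1) = a' + c'$.)
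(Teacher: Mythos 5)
Your proposal is correct and takes essentially the same route as the paper: the paper's proof conjugates by the identical map $\phi(z) = \frac{z-\sqrt{D}}{-2\sqrt{D}}$, reads off the Belyi parameter $-\frac{2}{3}aD$ from the explicit formula for $f^\phi$, and applies Proposition~\ref{prop:Belyiformalist}, justifying the descent with the single remark that none of the bounds on $a$ in Section~\ref{sec:pcf-belyi-bounds} depended on $c \in \QQ$. Your leading-coefficient identification of $a'$ and your explicit treatment of place normalizations and possible ramification at $2$ simply spell out what that one-sentence remark elides.
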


\begin{proof}
Let $\phi(z) = \frac{z-\sqrt{D}}{-2\sqrt{D}}$. Then $$f^\phi (z) = \frac{-2}{3}aD (-2z^3+3z^2)+\frac{aD}{3}-\frac{c-\sqrt{D}}{2\sqrt{D}}.$$ None of the bounds on $a$ in the previous section depended on the fact that $c\in \QQ$, so we may apply them to $f^\phi$. From Proposition \ref{prop:Belyiformalist}, we have that
$$\pm\frac{2}{3}aD \in \left\lbrace \frac12 , 1, \frac32, 2, \frac52, 3, \frac72 \right\rbrace. \qedhere$$ 
\end{proof}

\begin{lemma}
\label{prop:cbound}
Let $f(z) = a(z^3/3 - Dz) + c \in \QQ[z]$ with $\pm aD \in \left\{ \frac 3 4, \frac 3 2, \frac 9 4, 3, \frac{15}{4}, \frac 9 2, \frac{21}{4} \right\}$. If $f$ is  postcritically finite, then
$|c|^2 < 11 |D|$.
\end{lemma}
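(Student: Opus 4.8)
The plan is to conjugate $f$ so that its two critical points sit at $\pm 1$, and then run an escape estimate on the postcritical orbit. Concretely, set $\psi(z)=z/\sqrt D$ and let $h=f^\psi$; a direct computation gives
\[
h(z)=\frac{aD}{3}z^3-aD\,z+\frac{c}{\sqrt D}.
\]
Writing $b=aD$ and $\tilde c=c/\sqrt D$, we have $|\tilde c|^2=|c|^2/|D|$, so the desired inequality $|c|^2<11|D|$ is equivalent to $|\tilde c|<\sqrt{11}$. Since conjugation preserves the PCF property, it suffices to prove this bound for $h$, whose critical points are $\pm 1$ with critical values $v_\pm=\tilde c\pm\tfrac{2b}{3}$. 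Crucially, Proposition~\ref{prop:aD_vals} gives $|b|=|aD|\ge\tfrac34$.

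Next I would establish the escape estimate. For any $z$ with $|z|\ge|\tilde c|$,
\[
|h(z)|\ \ge\ \frac{|b|}{3}|z|^3-|b|\,|z|-|\tilde c|\ \ge\ |z|\left(\frac{|b|}{3}|z|^2-|b|-1\right),
\]
so $|h(z)|>|z|$ whenever $|z|^2>3+\tfrac{6}{|b|}$. Since $|b|\ge\tfrac34$, we have $3+\tfrac{6}{|b|}\le 11$; this is exactly where the constant $11$ enters, with the extreme case $|b|=\tfrac34$ being sharp.

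To finish, suppose for contradiction that $|\tilde c|\ge\sqrt{11}$. Because $b\neq 0$, the parallelogram law $|v_+|^2+|v_-|^2=2|\tilde c|^2+2\left(\tfrac{2|b|}{3}\right)^2$ forces $\max(|v_+|,|v_-|)>|\tilde c|$; let $v$ be the critical value attaining the maximum. Then $|v|>|\tilde c|\ge\sqrt{11}$, so both $|v|\ge|\tilde c|$ and $|v|^2>11\ge 3+\tfrac{6}{|b|}$ hold, and the estimate yields $|h(v)|>|v|$. The same two conditions persist for $h(v)$, so by induction the orbit of $v$ is strictly increasing in modulus, hence infinite — contradicting that $v$, being a critical value of the PCF map $h$, has finite orbit. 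Therefore $|\tilde c|<\sqrt{11}$, which is the claim.

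The main obstacle is securing the sharp constant with a \emph{strict} inequality: at the extreme value $|b|=\tfrac34$ the escape threshold is exactly $|z|^2=11$, so a naive triangle-inequality bound applied to $\tilde c$ itself would give only $|h|\ge|\mathrm{id}|$ at the boundary. The point that rescues strictness is the parallelogram identity, which makes $\max(|v_+|,|v_-|)$ strictly larger than $|\tilde c|$ (as $b\neq 0$), pushing $|v|^2$ strictly past the threshold. I would also verify that nothing breaks when $D<0$, so that $\sqrt D$ and $\tilde c$ are genuinely complex; since the entire argument uses only the archimedean absolute value on $\CC$, it carries over verbatim.
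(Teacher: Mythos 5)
Your proof is correct and takes essentially the same approach as the paper's: both run an archimedean escape estimate on the region $\{\,|z|>|c|\,\}$, with the constant $11$ arising exactly as in your computation from $|aD|\ge\tfrac34$ via $3+6/|aD|\le 11$, and both use the fact that the critical values have the form $c\pm t$ with $t\ne 0$, so at least one strictly exceeds $|c|$ in modulus and therefore wanders. Your only departures are cosmetic: you normalize the critical points to $\pm 1$ by conjugating with $\psi(z)=z/\sqrt{D}$, whereas the paper works directly with $f$ and $\pm\sqrt{D}$, and you spell out (via the parallelogram identity) the strictness step that the paper leaves as ``a calculation shows.''
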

\begin{proof}
$\Crit (f) = \lbrace \pm\sqrt{D}\rbrace$ and 
\[
f\left(\pm \sqrt{D} \right) = \mp \frac{2}{3} aD^{3/2} +c.
\] 
 A calculation shows that if $\abs{c}^2\geq 11\abs{D}$ and $\alpha \in \CC$ with $\abs{\alpha}>\abs{c}$, then $\abs{f(\alpha)}>\abs{\alpha}$. 
Since $a\neq 0$ then  at least one of the critical points $\gamma$ must satisfy $\abs{f(\gamma)}>\abs{c}$.
\end{proof}

\begin{lemma}
\label{prop:acvaluations}
Let $f(z) = a(z^3/3 - Dz) + c \in \QQ[z]$. If $f$ is $p$-adically post-critically bounded, then
\[
\abs{c\sqrt{a}}_p \leq 
\begin{cases}
1 & \text{ if } p \geq 5\\
3^{-1/2}& \text{ if } p = 3\\
2^{3} & \text{ if } p = 2.
\end{cases}
\]
\end{lemma}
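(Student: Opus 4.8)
The plan is to reduce everything to the Belyi-form bounds already established in Section~\ref{sec:pcf-belyi-bounds}, exactly as in the proof of Proposition~\ref{prop:aD_vals}. Conjugating by $\phi(z)=\frac{z-\sqrt D}{-2\sqrt D}$ turns $f$ into the bicritical polynomial
\[
f^\phi(z)=a'(-2z^3+3z^2)+c',\qquad a'=-\tfrac{2}{3}aD\in\QQ,\quad c'=\tfrac{aD}{3}+\tfrac12-\tfrac{c}{2\sqrt D}\in K,
\]
where $K=\QQ(\sqrt D)$. Since $\phi$ is affine with nonzero leading coefficient, it preserves boundedness of orbits, so $f^\phi$ is post-critically bounded at every place $w$ of $K$ lying over $p$. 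Fix an absolute value $\abs{\cdot}$ on $\bar\QQ$ extending $\abs{\cdot}_p$, so that $\abs{x}=\abs{x}_p$ for $x\in\QQ$ and $\abs{\sqrt y}=\abs{y}_p^{1/2}$ for $y\in\QQ$; throughout, $\abs{c\sqrt a}_p$ denotes $\abs{c}_p\abs{a}_p^{1/2}$. The crux is the identity obtained by multiplying $c'$ by $\sqrt{a'}$ and using $\sqrt{a'}/\sqrt D=\sqrt{-2a/3}$:
\[
c'\sqrt{a'}=\underbrace{\tfrac{aD}{3}\sqrt{a'}}_{T_1}+\underbrace{\tfrac12\sqrt{a'}}_{T_2}\underbrace{-\tfrac{c}{2}\sqrt{-\tfrac{2a}{3}}}_{T_3},
\qquad \abs{T_3}=\abs{6}_p^{-1/2}\,\abs{c\sqrt a}_p .
\]
Thus $T_3$ is exactly the term carrying $c\sqrt a$, and the strategy is to bound $\abs{c'\sqrt{a'}}$, $\abs{T_1}$, and $\abs{T_2}$ and then read off $\abs{T_3}$ from the ultrametric inequality $\abs{T_3}\le\max\{\abs{c'\sqrt{a'}},\abs{T_1},\abs{T_2}\}$.

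For odd $p$ (the cases $p\ge5$ and $p=3$) I would first record the $K$-analogue of the Belyi bound. By the remark following Lemma~\ref{cfvnonarch}, for $w\mid p$ we have $C_{f^\phi,w}=\max\{1,\abs{3/2}_w,\abs{1/(2a')}_w^{1/2}\}=\max\{1,\abs{a'}_w^{-1/2}\}$ since $\abs{2}_w=1$ and $\abs{3}_w\le1$; running the case analysis of Lemma~\ref{prodlemma} (the proof is identical over $K$, using Remark~\ref{remarkaboutabs}) then yields $\abs{c'}^2\abs{a'}\le1$, i.e. $\abs{c'\sqrt{a'}}\le1$. It remains to bound $T_1,T_2$, for which I use the explicit list $\pm aD\in\{\frac34,\frac32,\frac94,3,\frac{15}{4},\frac92,\frac{21}{4}\}$ from Proposition~\ref{prop:aD_vals}: for $p\ge5$ this gives $\abs{aD}_p\le1$, so $\abs{\sqrt{a'}}=\abs{aD}_p^{1/2}$ and $\abs{T_1}=\abs{aD}_p^{3/2}\le1$, $\abs{T_2}=\abs{aD}_p^{1/2}\le1$; hence $\abs{T_3}=\abs{c\sqrt a}_p\le1$. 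For $p=3$ one has $\abs{aD}_3\in\{\frac13,\frac19\}$, so $\abs{\sqrt{a'}}=3^{1/2}\abs{aD}_3^{1/2}$ and a short computation gives $\abs{T_1}=3^{3/2}\abs{aD}_3^{3/2}\le1$ and $\abs{T_2}=3^{1/2}\abs{aD}_3^{1/2}\le1$; since here $\abs{T_3}=3^{1/2}\abs{c\sqrt a}_3\le1$, we conclude $\abs{c\sqrt a}_3\le3^{-1/2}$, as claimed.

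The case $p=2$ is the main obstacle, because the Belyi-form bound at $2$ (Lemma~\ref{kis1}) was derived using the archimedean input $\abs{a}<4$, which does not transfer to $K$; so I would recompute $C_{f^\phi,w}$ directly over $K$ at a place $w\mid 2$. Here $\abs{3/2}_w=\abs{2}_2^{-1}=2$ and, since $a'\in\QQ$, $\abs{a'}_w=\frac12\abs{aD}_2$ with $\abs{aD}_2\in\{1,2,4\}$ from Proposition~\ref{prop:aD_vals}; substituting into Lemma~\ref{cfvnonarch} gives $C_{f^\phi,w}=\max\{2,\,2\abs{aD}_2^{-1/2}\}=2$. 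Post-critical boundedness then forces $\abs{c'}=\abs{f^\phi(0)}\le C_{f^\phi,w}=2$, and together with $\abs{a'}\le2$ this yields $\abs{c'\sqrt{a'}}\le 2^{3/2}$. Bounding $\abs{T_1}=2^{-1/2}\abs{aD}_2^{3/2}\le2^{5/2}$ and $\abs{T_2}=2^{1/2}\abs{aD}_2^{1/2}\le 2^{3/2}$, the ultrametric inequality gives $\abs{T_3}\le2^{5/2}$; since $\abs{T_3}=2^{1/2}\abs{c\sqrt a}_2$, this produces $\abs{c\sqrt a}_2\le 2^{2}\le 2^{3}$, which is even stronger than required. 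I expect the only delicate points to be bookkeeping the normalizations across the extension $\QQ\subset K(\sqrt{a'})$ and confirming that the $\QQ$-lemmas of Section~\ref{sec:pcf-belyi-bounds} survive the passage to $K$; the archimedean-free recomputation at $p=2$ is where that care is genuinely necessary.
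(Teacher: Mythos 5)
Your proof is correct, but it takes a genuinely different route from the paper's. The paper conjugates $f$ to a monic polynomial fixing $0$, namely $g(z)=z^3+3\alpha z^2+(3\alpha^2-aD)z$ where $\alpha$ is a root of the auxiliary cubic $z^3-(aD+1)z+c\sqrt{a/3}$, imports from \cite{MR3040666} (Theorems 1.2 and 4.1) the bounds $\le 1$ (resp.\ $\le 2$ when $p=2$) on critical points of $p$-adically post-critically bounded monic polynomials, and then bounds the constant term $c\sqrt{a/3}$ via a Newton polygon argument on that auxiliary cubic. You instead stay entirely inside the paper's own Belyi-form machinery: you reuse the conjugation from Proposition~\ref{prop:aD_vals} to land in the form $a'(-2z^3+3z^2)+c'$ over $K=\QQ(\sqrt D)$, extend Lemmas~\ref{cfvnonarch} and~\ref{prodlemma} and Remark~\ref{remarkaboutabs} to places $w\mid p$ of $K$ (an extension the paper itself explicitly sanctions after Lemma~\ref{cfvnonarch}, and whose proofs are indeed place-by-place computations valid over $K$), and then read off $\abs{c\sqrt a}_p$ from the ultrametric inequality applied to your decomposition $c'\sqrt{a'}=T_1+T_2+T_3$; your computations check out, including $\abs{T_3}=\abs{6}_p^{-1/2}\abs{c\sqrt a}_p$, the recomputation $C_{f^\phi,w}=2$ at $w\mid 2$ (which correctly sidesteps the archimedean input behind Lemma~\ref{kis1}), and at $p=2$ you even obtain the sharper bound $\abs{c\sqrt a}_2\le 2^2$. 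The trade-off: for odd $p$ the paper's argument uses only post-critical boundedness at the single prime $p$, whereas your bounds on $T_1$ and $T_2$ require the $aD$-list of Proposition~\ref{prop:aD_vals} --- archimedean/global input beyond the lemma's stated $p$-adic hypothesis --- at \emph{every} prime, not just at $p=2$; note, though, that the paper's own proof commits the same hypothesis-strengthening at $p=2$, and in Theorem~\ref{cubicbicriticalthm-irrat} the list is imposed in Step 1 before this lemma is invoked, so nothing downstream is affected. What your route buys in exchange is self-containedness (no external theorems, no Newton polygons) and a marginally stronger $2$-adic conclusion.
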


\begin{proof}
Let $g= f^\phi$ for some $\phi \in \PGL_2(\bar \QQ)$ so that $f^\phi$ is monic and has a fixed point at 0. Then $g$ is of the form
\begin{equation}
\label{eqn:monicform}
g(z) = z^3 +3\alpha z^2 +(3\alpha^2-aD)z
\end{equation}
 where $ \alpha$  is a root of the polynomial
\begin{equation}
\label{eqn:alphapoly}
z^3 -(aD+1) z + c \sqrt{\frac a 3}.
\end{equation}
The critical points for $g$ are now $-\alpha \pm \sqrt{\frac{aD}3}$.

From~\cite[Theorems 1.2 and 4.1]{MR3040666}, we know that if $g$
 is $p$-adically post-critically bounded, the critical points must satisfy
\[
\abs{-\alpha \pm \sqrt{\frac{aD}3} }_p \leq
\begin{cases}
1 & \text{ if } p >2 \\
2 & \text{ if } p = 2.
\end{cases}
\]

First consider $p \geq 3$. Add the critical points to see that
\[
\abs{-2\alpha}_p \leq 1, \quad \text{ so } \abs{\alpha  }_p \leq 1.
\]

Therefore, the polynomial in Equation~\eqref{eqn:alphapoly} is monic and all three roots lie in the $p$-adic unit disk. A Newton polygon argument says that the coefficients of that polynomial must also lie in the $p$-adic unit disk: if any coefficient had negative valuation, some segment of the Newton polygon would have positive slope, which would imply that the polynomial has a root of absolute value greater than one. 

Since the constant term lies in the $p$-adic unit disk, 
 \[
 \abs{ c\sqrt{\frac a 3} }_p \leq 1.
 \]
 That gives the following bounds for $p\neq 2$:
\[
\abs{c\sqrt{a}}_p \leq 
\begin{cases}
1 & \text{ if } p \geq 5\\
3^{-1/2}& \text{ if } p = 3.\\
\end{cases}
\]

Now consider the case $p=2$. We have
\begin{equation}\label{eqn:2adic_bound}
\abs{ -\alpha \pm \sqrt{\frac{aD}3} }_2 \leq2.
\end{equation}

Using the list of possible $aD$ values from Proposition~\ref{prop:aD_vals}, we see that 
\[
\abs{ \sqrt{\frac{aD}3} }_2 \leq 2.
\]
 Applying the ultrametric triangle inequality to Equation~\eqref{eqn:2adic_bound}  yields $\abs{ -\alpha}_2 \leq 2$. Therefore the Newton polygon for that polynomial 
 at $p=2$ can have a segment of slope at most 1. Since the polynomial in Equation ~\eqref{eqn:alphapoly} is cubic, that means  the constant term must satisfy
\[
v_2\left( c\sqrt{\frac a 3} \right) \geq -3. 
\]
So $\abs{c\sqrt{a}}_2 \leq 2^{3}$, which completes the proof.
\end{proof}

  \section{The algorithms}  \label{cubicpcfsection}
This section presents algorithms for finding all bicritical cubic PCF polynomials over $\QQ[z]$; the algorithms depend on normal forms found in Section~\ref{sec:normal} and coefficient bounds proven in Section~\ref{sec:CoeffBounds}.

\subsection{Case 1: Rational critical points}
Results in Section~\ref{sec:CoeffBounds} give a finite set of coefficients to test, so the first algorithm is straightforward.
\begin{theorem}
\label{cubicbicriticalthm-rat}If $f(z) \in \QQ [z]$ is a cubic bicritical PCF polynomial with rational critical points, then $f(z)$ is conjugate to 
$f_{a,c}(z) = a(-2z^3+3z^2)+c$ where 
\begin{small}
\[
(a,c) \in \left\lbrace (1,0), \left(\pm1,\frac 12 \right),  \left(\frac 12, \pm 1\right), \left(2, -\frac 12\right), \left(\frac 32, 0\right), (-1,1), \left(-2,\frac 32\right), \left(-\frac 32, 1\right), \left(-\frac 12, 0\right)\right\rbrace.
\]
\end{small}

\end{theorem}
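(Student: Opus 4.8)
The plan is to combine the normal form already established with the coefficient bounds and reduce the statement to a finite, explicit verification. By Proposition~\ref{familyprop}, any cubic bicritical polynomial over $\QQ$ with rational critical points is $\QQ$-conjugate to $f_{a,c}(z)=a(-2z^3+3z^2)+c$ for some $a,c\in\QQ$, with critical points fixed at $0$ and $1$. By Proposition~\ref{prop:Belyiformalist}, if such a map is PCF then $\pm a\in\{\tfrac12,1,\tfrac32,2,\tfrac52,3,\tfrac72\}$ and $\pm c\in\{0,1,\tfrac12,\tfrac32,2\}$, leaving at most $14\times 9$ pairs $(a,c)$. So the theorem reduces to deciding, for each candidate, whether $f_{a,c}$ is actually PCF, and then selecting one representative from each $\bar\QQ$-conjugacy class among the survivors.

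For the decision procedure I would iterate the two critical points $0$ and $1$ under $f_{a,c}$ and track their forward orbits. Lemma~\ref{cor1} (equivalently Lemma~\ref{wanderingpointcor}) makes this terminate: at each step, either some iterate exceeds $C_{f,\nu}$ at some place $\nu$, in which case that critical point is wandering and $f_{a,c}$ is not PCF, or the iterate lies in the set $S=\{x\in\QQ:\abs{x}_\nu\leq C_{f,\nu}\text{ for all }\nu\in M_\QQ\}$. Since $C_{f,\nu}=1$ for all but finitely many $\nu$, the set $S$ has bounded height and is therefore finite and explicitly enumerable; hence within $\abs{S}+1$ iterations the orbit either escapes (not PCF) or repeats a point (finite orbit). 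Each candidate is thus decided by a halting finite computation, and only a handful survive as PCF.

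Finally I would remove redundancy among the surviving pairs. Any affine conjugacy between two maps in this family must carry $\Crit(f_{a,c})=\{0,1\}$ to $\{0,1\}$, so it permutes $\{0,1\}$ and is therefore either the identity or the involution $\psi(z)=1-z$. A direct computation gives $f_{a,c}^{\,\psi}=f_{a,\,1-a-c}$, so $(a,c)$ and $(a,1-a-c)$ determine the same class, while distinct values of $a$ force distinct classes. Reducing the PCF candidates modulo $c\mapsto 1-a-c$ yields exactly one representative per conjugacy class, namely the list in the statement.

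The main obstacle I anticipate is not conceptual but bookkeeping: one must genuinely run all candidate pairs, control the denominator growth along orbits so that the finite height bound $S$ really does cut off the iteration, and confirm that the chosen representatives are pairwise non-conjugate under $c\mapsto 1-a-c$. None of these steps is deep, but the exhaustiveness of the search and the rigor of the termination criterion are where care is required.
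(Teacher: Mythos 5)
Your proposal takes essentially the same route as the paper: it reduces via Proposition~\ref{familyprop} to the normal form $f_{a,c}(z)=a(-2z^3+3z^2)+c$, invokes Proposition~\ref{prop:Belyiformalist} to obtain the same finite set of $126$ candidate pairs, and concludes with an exhaustive finite check of which candidates are PCF. The only difference is one of explicitness, not substance: you spell out the halting criterion (escape past $C_{f,\nu}$ via Lemma~\ref{cor1}, plus finiteness of the bounded-height set) and the deduplication under $\psi(z)=1-z$ (your computation $f_{a,c}^{\psi}=f_{a,\,1-a-c}$ is correct), both of which the paper delegates to Sage's built-in PCF functionality.
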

\begin{proof}
From Proposition ~\ref{familyprop}, we know that every cubic polynomial in $\QQ[z]$ with rational critical points is conjugate to a map of the form $f_{a,c}(z) = a(-2z^3+3z^2)+c$ for some $a,c \in \QQ$. 

From Proposition~\ref{prop:Belyiformalist},  if $f_{a,c}$ is post critically bounded in every place, then
$$\pm a\in \left\lbrace  \frac12,  1,  \frac 32,  2,  \frac 5 2, 3,  \frac{7}{2}\right\rbrace \text{ and }
\pm c \in \left\lbrace 0 ,  1,  \frac 12,  \frac 32,  2 \right\rbrace.$$

This gives $126$ possibilities for $(a,c)$.
 The authors used built-in Sage~\cite{sage} functionality\footnote{Sage code is available with the arXiv distribution of this article.} to test all such pairs.
\end{proof}

\subsection{Case 3: Irrational critical points}
This case is more delicate. Results in Section~\ref{sec:CoeffBounds} give  relationships between absolute values of the coefficients for cubic PCF maps. We must disentangle these relationships to build a finite search space.

\begin{theorem}
\label{cubicbicriticalthm-irrat}
If $f(z) \in \QQ [z]$ is a cubic bicritical PCF polynomial that is not conjugate to a polynomial with rational critical points, then $f(z)$ is conjugate to 
\begin{small}
$f_{D,a,c}(z) = a(\frac{z^3}{3}-Dz)+c$ where $$(D,a,c) \in \left\lbrace \left(2, -\frac34, 2\right), \left(-7, -\frac{3}{28}, \frac 72\right) \right\rbrace.$$
\end{small}
\end{theorem}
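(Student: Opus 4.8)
The plan is to mirror the proof of Theorem~\ref{cubicbicriticalthm-rat}: combine the normal form with the coefficient bounds of Section~\ref{sec:CoeffBounds} to carve out a finite set of candidate triples $(D,a,c)$, and then test each candidate by computer. By Proposition~\ref{prop:MoveCritPts}, after conjugation we may assume $f(z) = f_{D,a,c}(z) = a(z^3/3 - Dz) + c$ with $a,c \in \QQ$ and $D$ a squarefree integer; since the critical points $\pm\sqrt{D}$ are irrational by hypothesis, $D$ is not a perfect square. First I would dispose of the case $c = 0$: conjugating by $\phi(z) = z/\sqrt{D}$ moves the critical points $\pm\sqrt{D}$ to $\pm 1$ and yields $f^\phi(z) = aD(z^3/3 - z) = aD\,\mathcal{P}_{3,1}(z)$, a polynomial with rational critical points. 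This contradicts the assumption that $f$ is not conjugate to a map with rational critical points, so we may assume $c \neq 0$ throughout.

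Next I would assemble the three PCF constraints available for this normal form. Proposition~\ref{prop:aD_vals} restricts $aD$ to one of fourteen explicit rational values, so that $a$ is determined by $D$ up to finitely many choices. Lemma~\ref{prop:cbound} supplies the archimedean bound $|c|^2 < 11|D|$, and Lemma~\ref{prop:acvaluations} supplies $p$-adic bounds on $c\sqrt{a}$ at every prime. Writing $a = (aD)/D$ and unwinding valuations, the $p$-adic bounds say that $c$ is integral away from $2$ with $16c \in \ZZ$, and, crucially, that every prime $p \ge 11$ dividing $D$ must divide the numerator of $c$: for such $p$ one has $|aD|_p = 1$, hence $|a|_p = p$, and the bound $|c\sqrt{a}|_p \le 1$ forces $v_p(c) \ge 1$.

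The main obstacle, and the only point where the argument genuinely departs from Case 1, is bounding $|D|$. Every archimedean quantity here scales like $\sqrt{|D|}$, and in fact feeding the local bounds through the product formula causes the $|D|$-dependence to cancel entirely, so no purely multiplicative estimate will bound $D$. The resolution exploits the asymmetric interplay between the divisibility above and the discreteness of $\ZZ$. Let $P$ denote the product of the primes $p \ge 11$ dividing $D$. The divisibility forces $P \mid 16c$, and since $16c$ is a nonzero integer this gives $|c| \ge P/16$; combining with $|c|^2 < 11|D|$ and the observation that the remaining part of $D$ divides $2\cdot 3 \cdot 5 \cdot 7 = 210$ bounds $P$, and hence $|D|$.

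With $|D|$ bounded there remain finitely many $D$; for each, the fourteen choices of $aD$ pin down $a$; and for each pair $(D,a)$ the conditions $16c \in \ZZ$, $P \mid 16c$, and $|c| < \sqrt{11|D|}$ leave only finitely many $c$. Finally I would run this finite search in Sage, test each surviving candidate directly for the PCF property, and confirm that exactly the two triples $(2,-3/4,2)$ and $(-7,-3/28,7/2)$ remain.
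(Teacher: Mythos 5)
Your proposal is correct and takes essentially the same route as the paper: after invoking Proposition~\ref{prop:MoveCritPts} and disposing of $c=0$, the paper's proof likewise combines Proposition~\ref{prop:aD_vals}, Lemma~\ref{prop:acvaluations}, and Lemma~\ref{prop:cbound}, writing $D=mP$ or $D=2mP$ with $P$ forced to divide the numerator of $c$, so that integrality of $c$ plays against the archimedean bound $|c|^2<11|D|$ to bound $P$ and hence $D$, followed by a finite Sage search. The only difference is bookkeeping: the paper's $P$ absorbs every odd prime dividing the denominator of $a$ (not just $p\ge 11$) and tracks $|c|_2$ exactly via the parity of $D$, which shrinks the search space to $5{,}957$ triples versus your coarser bound $P<11\cdot 210\cdot 2^{8}$ --- a matter of efficiency only, not of correctness.
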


\begin{proof} 
From Proposition~\ref{prop:MoveCritPts}, we know that every cubic polynomial in $\QQ[z]$ with irrational critical points is conjugate to a map of the form $f_{D,a,c}(z) = a(\frac{z^3}{3}-Dz)+c$ for some $a,c \in \QQ$ and a squarefree integer $D$. 

Note that if $c=0$, then $f_{D,a,c}(z)$ is conjugate to a cubic polynomial with rational critical points via conjugation by $\phi (z) = \frac{a-\sqrt{D}}{-z\sqrt{D}}$. Furthermore, $f_{D,a,-c}(z)$ is conjugate to $f_{D,a,c}(z)$, so we may assume that $c>0$. Therefore, we build a list of triples  $(D,a,c)$ with $D,a,c >0$, and each triple corresponds to four possibly PCF polynomials (varying the signs of $D$ and $a$).  
We split the algorithm into two cases corresponding to $D$ even and $D$ odd. 

\begin{description}
\item[Step 1] {\bf Loop over possible $aD$ values.}
From Proposition~\ref{prop:aD_vals}, if $f_{D,a,c}$ is PCF then:
\[
\pm aD \in \left\{ \frac 3 4, \frac 3 2, \frac 9 4, 3, \frac{15}{4}, \frac 9 2, \frac{21}{4} \right\}.
\]

\item[Step 2]\label{step:a2val} {\bf Compute $|a|_2$.}
 We use the value of $aD$ in  Step 1 and the parity of $D$.

\item [Step 3] {\bf Find an upper bound for  $|c|_p$ for each prime $p$.}
From  Lemma~\ref{prop:acvaluations}, we know that if $f_{D,a,c}(z)$ is $p$-adically post-critically bounded, then
\begin{equation}\label{eqn:adic-cond}
\abs{c\sqrt{a}}_p \leq 
\begin{cases}
1 & \text{ if } p \geq 5\\
3^{-1/2}& \text{ if } p = 3\\
2^{3} & \text{ if } p = 2.
\end{cases}
\end{equation}
So from  Step 2 we can find $e \leq 3$ such that $|c|_2 \leq 2^e$. Also using the list in Step 1, we conclude that $\abs{c}_p \leq 1$ for each prime $p\geq 3$.

\item [Step 4] {\bf Factor $D$ and $c$.}
Write $D=mP$ or $D=2mP$, where $m$ and $P$ are relatively prime odd squarefree integers such that $m$ divides the numerator of $aD$ and $P$ divides the denominator of $a$.  By equation~\eqref{eqn:adic-cond},  $P$ must also divide the numerator of $c$. Thus,  $c=\frac{Pk}{2^e}$ for some positive integer $k$. Note: For a fixed  $aD$ from the list above, $m$ comes from a finite set, but for now $P$ and $k$ can be arbitrarily large.

\item [Step 5] {\bf Bound the factors of $D$ and $c$.}
  From Proposition~\ref{prop:cbound}, we know that if $f_{D,a,c}(z)$ is post-critically bounded at the archimedean place, then  $|c|^2 < 11 |D|$.
Depending on the parity of $D$, this gives 
 \[
 \frac{P^2k^2}{2^{2e}} < 11mP \quad \text{ or }\quad  \frac{P^2k^2}{2^{2e}} < 22mP.
 \]
So $ Pk^2 < B$ where 
 $B = 11m\cdot 2^{2e} $  when $D$ is odd, and  $ B=11m \cdot 2^{2e+1}$  when $D$ is even.
  We know $e$   from Step 3, so for each $m$ we have an explicit value for the upper bound $B$.

\item [Step 6]  {\bf Loop over  $P$ values.}
For all odd, squarefree integers  $P < B$, we determine the set of possible $k$ values such that $Pk^2<B$.

\item [Step 7] {\bf Create the triple.} Each triple $(m, P, k)$ yields a triple $(D,a,c) = (mP, \frac{aD}{mP}, \frac{Pk}{2^e})$ or $(D,a,c) = (2mP, \frac{aD}{2mP}, \frac{Pk}{2^e})$. Finally, check that $3|ac$ to verify that the triple satisfies the $3$-adic condition in equation~\eqref{eqn:adic-cond}. If so, add $(D,a,c)$ to the list of possible PCF triples.
\end{description}

This algorithm yields a list of 5,957 triples corresponding to 23,828 possibly PCF polynomials. The authors used built-in Sage~\cite{sage} functionality to test all such triples.
 Only the two listed in the theorem statement above are actually PCF and are not conjugate to a polynomial already found in Theorem~\ref{cubicbicriticalthm-rat}.
\end{proof}

 Combining the results in Theorems~\ref{uniprop} , \ref{cubicbicriticalthm-rat}, and~\ref{cubicbicriticalthm-irrat} yields a total of 15 conjugacy classes of PCF cubic polynomials over $\QQ[z]$, and this list is exhaustive. In the table below, we provide one representative of each conjugacy class along with the critical portrait for the polynomial. In the portrait, the critical points are given by $\gamma$ and other points in the post-critical set are denoted $\bullet$. The monic centered form is given when it is defined over $\QQ[z]$; these appeared in~\cite{ingram2010}.

\begin{longtable}[b]{m{.2\paperwidth} m{.25\paperwidth}  m{.2\paperwidth} }
\hline
PCF polynomial   & Critical portrait  & Monic centered form\\
\hlineƒ
  $z^3$  &  \begin{tikzpicture}
	   \node (0) at (0,0) {\tiny$\gamma_1$};
            \draw [black, ->] (0) to [out = 0, in = 90, distance = .75 cm] (0);
            \end{tikzpicture}
            & $z^3$\\
\hline
$-z^3+1$ &  
            \begin{tikzpicture}
            \node (blah) at (0,1/2)  { };
            \node (0) at (0,0) {\tiny$\gamma_1$};
            \node (1) at (1,0) {\tiny$\bullet$};

            \draw [black, ->] (0) to [out = -45, in = 225] (1);
            \draw [black, ->]   (1) to[out=135,in=45] (0);
            \end{tikzpicture} &\\
\hline
$-2z^3+3z^2+\frac12$  &  
            \begin{tikzpicture}
 \node (0) at (0,0) {\tiny$\gamma_1$};
        \node (1) at (1.5,.5) {\tiny$\gamma_2$};
        \node (12) at (.75,0) {\tiny$\bullet$};
        \node (32) at (1.5,-.5) {\tiny$\bullet$};

        \draw [black, ->] (0) to (12);
        \draw [black, ->]   (12) to[out=90,in=180] (1);
        \draw [black, ->]   (1) to[out=0,in=0] (32);
        \draw[black, ->] (32) to [out = 180, in = 270] (12);
            \end{tikzpicture} & \\ 
  \midrule
 $-z^3+\frac{3}{2}z^2+1$ &   \begin{tikzpicture}
            \node (0) at (0,0) {\tiny$\gamma_1$};
            \node (1) at (1,0) {\tiny$\gamma_2$};
            \node (32) at (2,0) {\tiny$\bullet$};

            \draw [black, ->] (0) to (1);
            \draw [black, ->]   (1) to[out=-90,in=-90] (32);
            \draw[black, ->] (32) to [out = 90, in = 90] (1);
        \end{tikzpicture}  &\\
  \midrule
 $-2z^3+3z^2$   &    \begin{tikzpicture}
            \node (0) at (0,0) {\tiny$\gamma_1$};
            \node (1) at (2,0) {\tiny$\gamma_2$};
            \draw [black, ->] (0) to [out =0, in = 90, distance = .75 cm] (0);
            \draw [black, ->]   (1) to[out=0,in=90, distance = .75 cm] (1);
        \end{tikzpicture} 
         & $z^3+\frac 32 z$ \\
  \midrule        
$-3z^3+\frac 92 z^2$  &  
        \begin{tikzpicture}
            \node (0) at (2,0) {\tiny $\gamma_1$};
            \node (1) at (0,0) {\tiny$\gamma_2$};
            \node (32) at (1,0) {\tiny$\bullet$};

            \draw [black, ->] (1) to (32);
            \draw [black, ->]   (32) to (0);
            \draw[black, ->] (0) to [out = 0, in = 90, distance = .75 cm ] (0);
        \end{tikzpicture} &\\
   \midrule
$-z^3+\frac 32z^2-1$ & 
  \begin{tikzpicture}
            \node (0) at (0,0) {\tiny $\gamma_1$};
            \node (-1) at (.75,0) {\tiny$\bullet$};
            \node (32) at (1.75,0) {\tiny$\bullet$};
            \node (1) at (2.5,0) {\tiny$\gamma_2$};
            \node (12) at (3.25,0) {\tiny$\bullet$};
            \draw [black, ->] (0) to (-1);
            \draw [black, ->]   (-1) to[out=-90,in=-90] (32);
            \draw[black, ->] (32) to [out = 90, in = 90] (-1);
            \draw[black, ->] (1) to (12);
            \draw[black, ->] (12) to [out = 0, in = 90, distance = .75 cm] (12);
        \end{tikzpicture} &\\
  \midrule
$-4z^3+6z^2-\frac 12$  &
\begin{tikzpicture}
            \node (0) at (0,0) {\tiny $\gamma_1$};
            \node (12) at (.75,0) {\tiny$\bullet$};
            \node (32) at (1.75,0) {\tiny$\bullet$};
            \node (1) at (2.5,0) {\tiny$\gamma_2$};
            \draw [black, ->] (0) to (12);
            \draw[black, ->] (12) to [out= -45, in =225] (32);
            \draw [black, ->]   (32) to[out=135,in=45] (12);
            \draw[black, ->] (1) to  (32);
        \end{tikzpicture}
        & $z^3+3z$\\
  \midrule
$2z^3-3z^2+1$ & 
        \begin{tikzpicture}
            \node (0) at (0,0) {\tiny $\gamma_1$};
            \node (1) at (1,0) {\tiny$\gamma_2$};

            \draw [black, ->] (0) to [out = -45, in = 225] (1);
            \draw [black, ->]   (1) to[out=135,in=45] (0);
        \end{tikzpicture}
         & $z^3-\frac 32 z$\\
  \midrule
$4z^3-6z^2+\frac 32$  &    \begin{tikzpicture}
            \node (0) at (0,0) {\tiny $\gamma_1$};
            \node (32) at (1,0) {\tiny$\bullet$};
            \node (1) at (2,0) {\tiny$\gamma_2$};
            \node (-12) at (3,0) {\tiny$\bullet$};
            \draw [black, ->] (0) to (32);
            \draw [black, ->]   (32) to[out=0,in=90, distance = .75cm ] (32);
            \draw[black, ->] (1) to (-12);
            \draw[black, ->] (-12) to [out = 0, in = 90, distance = .75 cm] (-12);
        \end{tikzpicture}
        & $z^3-3z$ \\
  \midrule
$2z^3-3z^2+\frac 12$ &  \begin{tikzpicture}
            \node (0) at (0,0) {\tiny $\gamma_1$};
            \node (12) at (1,0) {\tiny$\bullet$};
            \node (1) at (2,0) {\tiny$\gamma_2$};
            \node (-12) at (3,0) {\tiny$\bullet$};
            \draw [black, ->] (0) to [out = -45, in = -135] (12);
            \draw [black, ->]   (12) to[out=135,in=45, ] (0);
            \draw[black, ->] (1) to (-12);
            \draw[black, ->] (-12) to [out = 0, in = 90, distance = .75 cm] (-12);
        \end{tikzpicture} &\\
  \midrule
 $3z^3-\frac 92z^2+1$ & 
        \begin{tikzpicture}
            \node (0) at (0,0) {\tiny $\gamma_1$};
            \node (1) at (1,0) {\tiny$\gamma_2$};
            \node (-12) at (2,0) {\tiny$\bullet$};
            \draw [black, ->] (0) to (1);
            \draw [black, ->]   (1) to (-12);
            \draw[black, ->] (-12) to [out = 0, in = 90, distance = .75 cm] (-12);
        \end{tikzpicture} &\\
  \midrule
  $z^3-\frac 32z^2$&      \begin{tikzpicture}
            \node (0) at (0,0) {\tiny $\gamma_1$};
            \node (1) at (2,0) {\tiny $\gamma_2$};
            \node (-12) at (3,0) {\tiny$\bullet$};
            \draw [black, ->] (0) to [out = 0, in = 90, distance = .75 cm ](0);
            \draw[black, ->] (1) to (-12);
            \draw[black, ->] (-12) to [out = 0, in = 90, distance = .75 cm] (-12);
        \end{tikzpicture} 
         & $z^3-\frac 34 z + \frac 34$ and $z^3-\frac 34 z - \frac 34$\\
  \midrule
$-\frac{1}{4}z^3+\frac32 z+2$ &  \begin{tikzpicture}
            \node (rt2) at (0,0) {\tiny $\gamma_1$};
            \node (2plus) at (.75,0) {\tiny$\bullet$};
            \node (-2rt) at (1.5,0) {\tiny$\bullet$};
            \draw [black, ->] (rt2) to (2plus);
            \draw[black, ->] (2plus) to [out= -45, in =225] (-2rt);
            \draw [black, ->]   (-2rt) to[out=135,in=45] (2plus);
            \node (-rt2) at (2.25,0) {\tiny $\gamma_2$};
            \node (2minus) at (3,0) {\tiny$\bullet$};
            \node (2rt) at (3.75,0) {\tiny$\bullet$};
            \draw [black, ->] (-rt2) to (2minus);
            \draw[black, ->] (2minus) to [out= -45, in =225] (2rt);
            \draw [black, ->]   (2rt) to[out=135,in=45] (2minus);
        \end{tikzpicture}&\\
  \midrule
$-\frac{1}{28}z^3-\frac34 z+\frac 72$& 
 \begin{tikzpicture}
            \node (sqrt-7) at (0,0) {\tiny $\gamma_1$};
            \node (7minus) at (1,0) {\tiny$\bullet$};
            \node (-sqrt-7) at (2,0) {\tiny $\gamma_2$};
            \node (7plus) at (3,0) {\tiny$\bullet$};
            \draw [black, ->] (sqrt-7) to [out = -45, in = -135] (7minus);
            \draw [black, ->]   (7minus) to[out=135,in=45, ] (sqrt-7);
            \draw [black, ->] (-sqrt-7) to [out = -45, in = -135] (7plus);
            \draw [black, ->]   (7plus) to[out=135,in=45, ] (-sqrt-7);
        \end{tikzpicture} & \\
  \midrule
\caption{Critical Portraits of Cubic PCF Polynomials over $\QQ$}\label{CubicTable}

\end{longtable}

\bibliographystyle{plain}
\bibliography{CubicPCFFinal}

\end{document}